\newtheorem{thm}{Theorem}
\newtheorem{prob}{Problem}
\newtheorem{lem}[thm]{Lemma}
\newtheorem{fact}{Fact}
\newtheorem{claim}{Claim}
\theoremstyle{definition}
\renewcommand\proofname{\bf Proof}
\begin{document}

\title{\bf The maximum spectral radius of wheel-free graphs}
\author{{Yanhua Zhao, Xueyi Huang\footnote{Corresponding author.}\setcounter{footnote}{-1}\footnote{\emph{Email address:} yhua030@163.com (Y. Zhao), huangxymath@163.com (X. Huang), huiqiulin@126.com (H. Lin).} \ and Huiqiu Lin}\\[2mm]
\small Department of Mathematics, East China University of Science and Technology, \\
\small  Shanghai 200237, P.R. China}

\date{}
\maketitle
{\flushleft\large\bf Abstract}  A wheel graph is a graph formed by connecting a single vertex to all vertices of a cycle. A graph is called wheel-free if it does not contain any wheel graph as a subgraph. In 2010, Nikiforov  proposed a Brualdi-Solheid-Tur\'{a}n type problem: what is the maximum spectral radius of a graph of order $n$ that does not contain  subgraphs of particular kind.
In this paper, we study the Brualdi-Solheid-Tur\'{a}n type problem for wheel-free graphs, and we determine the maximum  (signless Laplacian) spectral radius of a wheel-free graph of order $n$. Furthermore, we  characterize the extremal graphs.
\begin{flushleft}
\textbf{Keywords:}  Wheel-free graph; Spectral radius; Extremal graph; Quotient matrix.
\end{flushleft}
\textbf{AMS Classification:} 05C50

\section{Introduction}
Let $G$ be an undirected simple graph with vertex set $V(G)$ and edge set $E(G)$ (denote by $e(G)=|E(G)|$). For any $v\in V(G)$, let $N_k(v)$ denote the set of vertices at distance $k$ from $v$ in $G$. In particular, the vertex subset $N(v)=N_1(v)$ is called the \textit{neighborhood} of $v$, and  $d_v=|N(v)|$ is called the \textit{degree} of $v$. The \textit{adjacency matrix} of $G$ is defined as $A(G)=(a_{u,v})_{u,v\in V(G)}$, where $a_{u,v}=1$ if $uv\in E(G)$, and $a_{u,v}=0$ otherwise. Let $D(G)=\mathrm{diag}(d_v:v\in V(G))$ denote the diagonal matrix of vertex degrees of $G$. Then $Q(G)=D(G)+A(G)$ is called the \textit{signless Laplacian matrix} of $G$. The  (adjacency) \textit{spectral radius} $\rho_A(G)$ and the \textit{signless Laplacian spectral radius} $\rho_Q(G)$ of $G$ are  the largest eigenvalues of $A(G)$ and $Q(G)$, respectively. In addition, for any $n\times n$ matrix $M$ with only real eigenvalues, we always arrange its eigenvalues in a non-increasing order: $\lambda_1(M)\geq \lambda_2(M)\geq \cdots \geq \lambda_n(M)$.

For any $S,T\subseteq V(G)$ with $S\cap T=\emptyset$, let $E(S,T)$ denote the set of edges between $S$ and $T$ in $G$ (denote by $e(S,T)=|E(S,T)|$), and let  $G[S]$ denote the subgraph of $G$ induced by $S$. For any $e\in E(G)$, let $G-e$ denote the graph obtained by deleting $e$ from $G$. Given two graphs $G$ and $H$,  let $G\nabla H$ denote the graph obtained from the disjoint union $G\cup H$ by adding all edges between $G$ and $H$. For any nonnegative integer $k$, let  $kG$ denote the disjoint union of $k$ copies of $G$. As usual, we denote by $K_n$, $P_n$, $C_n$ and $W_n=K_1\nabla C_{n-1}$  the complete graph, the path, the cycle and the wheel graph on $n$ vertices, respectively. Also, we denote by $B_k$ the book graph with $k$-pages, $F_k$ the graph on $2k+1$ vertices  consisting of $k$ triangles which intersect in exactly one common vertex, and  $K_{s,t}$  the complete bipartite graph with  two parts of size $s$ and $t$.

Let $\mathcal{H}$ be a family of graphs. A graph $G$ is called $\mathcal{H}$-\textit{free} if it does not contain any graph of $\mathcal{H}$ as a subgraph.   The \textit{Tur\'{a}n number} of $\mathcal{H}$, denoted by $ex(n,\mathcal{H})$, is the maximum number of edges in an $\mathcal{H}$-free graph of order $n$.  Let $Ex(n,\mathcal{H})$ denote the set of $\mathcal{H}$-free graphs of order $n$ with $ex(n,\mathcal{H})$ edges. To determine $ex(n,\mathcal{H})$ and characterize the graphs in $Ex(n,\mathcal{H})$ for various kinds of $\mathcal{H}$  is a basic problem in extremal graph theory (see \cite{CF,NI5,SI} for surveys). In particular, for $\mathcal{H}=\{W_{2k}\}$, the Simonovits's theorem  (see \cite[Theorem 1, p. 285]{SIM}) implies that $ex(n,\{W_{2k}\})=ex(n,\{K_4\})=\lfloor\frac{n^2}{3}\rfloor$ and $Ex(n,\{W_{2k}\})=\{T_{n,3}\}$ for sufficiently large $n$, where $T_{n,3}$ is the complete $3$-partite graph of order $n$ with part sizes as equal as possible.  In 2013, Dzido \cite{DZ} improved this result to  $n\geq 6k-10$ for $k\geq 3$. For $\mathcal{H}=\{W_{2k+1}\}$, Dzido and Jastrz\c{e}bski \cite{DJ} proved that $ex(n,\{W_5\})=\lfloor\frac{n^2}{4}+\frac{n}{2}\rfloor$, $ex(n,\{W_7\})=\lfloor\frac{n^2}{4}+\frac{n}{2}+1\rfloor$, and  $ex(n,\{W_{2k+1}\})\geq \lfloor\frac{n^2}{4}+\frac{n}{2}\rfloor$ for all values of $n$ and $k$. Very recently, Yuan \cite{YU} established that $ex(n,\{W_{2k+1}\})=\max\{n_0n_1+\lfloor\frac{(k-1)n_0}{2}\rfloor􏰉+2:n_0+n_1=n\}$ for $k\geq 3$ and sufficiently large $n$.

In spectral graph theory, the well-known Brualdi-Solheid problem (see \cite{BS}) asks for the maximum spectral radius of a graph belonging to a specified class of graphs and the characterization of the extremal graphs. Up to now, this problem has been studied for many classes of graphs, and readers are referred  to  \cite{STE} for  systematic results. As the blending of  the Brualdi-Solheid problem and the general Tur\'{a}n type problem, Nikiforov \cite{NI3} proposed a Brualdi-Solheid-Tur\'{a}n type problem: 
\begin{prob}\label{prob-2}
What is the maximum spectral radius of an $\mathcal{H}$-free graph of order $n$?
\end{prob}

In the past few decades, much attention has been paid to Problem \ref{prob-2}  for various families of graphs $\mathcal{H}$ such as  $\mathcal{H}=\{K_s\}$ \cite{NI1,WI}, $\{K_{s,t}\}$ \cite{BG,NI1,NI4}, $\{{B_{k+1},K_{2,l+1}}\}$ \cite{SS}, $\{F_k\}$ \cite{CFTZ} $\{P_s\}$ \cite{NI3}, $\{C_{2k+1}\}$ \cite{NI2}, $\{C_3,C_4\}$ \cite{LLT}, $\{C_4\}$ \cite{NI1,ZW}, $\{C_5,C_6\}$ \cite{YWZ}, $\{W_5,C_6\}$ \cite{ZWF}, $\{C_6\}$ \cite{ZL},  $\{\cup_{i=1}^kP_{s_i}\}$ \cite{CLZ}, $\{C_{l}:l\geq 2k+1\}$ and $\{C_{l}:l\geq 2k+2\}$ \cite{GH}. For more  results on  extremal spectral graph theory, we refer the reader to \cite{NI5}.  

Motived by the general Tur\'{a}n type problem, it is natural to consider the Brualdi-Solheid-Tur\'{a}n type problem  for $\{W_\ell\}$-free graphs, where $l$ is any fixed integer. However,  it seems difficult to determine the maximum spectral radius of a $\{W_\ell\}$-free graph of order $n$. In this paper, we consider a closely related problem:
\begin{prob}\label{prob-3}
What is the maximal spectral radius of a wheel-free (i.e., $\{W_\ell:\ell\geq 4\}$-free) graph of order $n$?
\end{prob}

Let $H_n$ be defined as
\begin{equation}\label{eq-1}
H_n=\left\{
\begin{array}{ll}
\frac{n-1}{4}K_2 \nabla  \frac{n+1}{2}K_1 & \mbox{if $n\equiv 1~\mathrm{mod}~4$}, \\
\frac{n+1}{4}K_2 \nabla  \frac{n-1}{2}K_1 & \mbox{if $n\equiv 3~\mathrm{mod}~4$}, \\
\frac{n}{4}K_2 \nabla  \frac{n}{2}K_1 & \mbox{if $n\equiv 0~\mathrm{mod}~4$}, \\
(\frac{n-2}{4}K_2\cup K_1) \nabla  \frac{n}{2}K_1 & \mbox{if $n\equiv 2~\mathrm{mod}~4$},
\end{array}
\right.
\end{equation}
and let $F$ be the complement of $C_7$ shown in Figure \ref{fig-1}. Notice that both $H_n$ and $F$ are wheel-free. As an answer to Problem \ref{prob-3}, we prove that
\begin{thm}\label{thm-1}
Let $G$ be a wheel-free graph of order $n\geq 4$. Then 
$$\rho_A(G) \leq \rho_A(H_n),$$ with equality holding if and only if $G=H_n$ for $n\neq7$ and $G=H_7$ or $F$ for $n=7$.
\end{thm}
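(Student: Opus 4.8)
The plan is to build the proof on the local characterisation of wheel-freeness: a graph $G$ is wheel-free if and only if $G[N(v)]$ is a forest for every vertex $v$, because a wheel $W_\ell$ with $\ell \ge 4$ is exactly a vertex together with a cycle inside its neighbourhood. In particular $e(G[N(v)]) \le d_v - 1$ for every $v$, which caps the triangles through each vertex. First I would record $\rho_A(H_n) = \tfrac12\bigl(1 + \sqrt{1 + 8ab}\bigr)$, read off from the equitable $2\times2$ quotient matrix $\left(\begin{smallmatrix} 1 & b \\ 2a & 0\end{smallmatrix}\right)$ of the join $aK_2 \nabla bK_1$ with $2a + b = n$, where $ab$ is maximised by the rounding in \eqref{eq-1}; since $H_n$ is wheel-free, the extremal graph $G$ satisfies $\rho := \rho_A(G) \ge \rho_A(H_n) \approx \tfrac{n}{2}$.

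Next I would set up the eigenvector machinery. Let $x$ be the Perron eigenvector of $G$ scaled so that $x_u = \max_v x_v = 1$. From $\rho = \sum_{w\sim u} x_w \le d_u$ we get $d_u \ge \rho \approx \tfrac n2$, and expanding twice gives
\begin{equation*}
\rho^2 = \rho^2 x_u = d_u + \sum_{z \in N(u)} \deg_{G[N(u)]}(z)\, x_z + \sum_{z \in N_2(u)} |N(z) \cap N(u)|\, x_z .
\end{equation*}
Since $\sum_{z\in N(u)} \deg_{G[N(u)]}(z) = 2\,e(G[N(u)]) \le 2(d_u-1)$, the first two terms are $O(n)$, so the bulk of $\rho^2 \approx \tfrac{n^2}{4}$ must be carried by the distance-two sum. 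This forces a large set of vertices to share almost all of $N(u)$ as a common neighbourhood, revealing a complete bipartite backbone $K_{A,B}$ with $|A|, |B| \approx \tfrac n2$.

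With the backbone in hand, I would upgrade it to the exact shape of $H_n$. The forest condition pins down the two sides: if a vertex $a \in A$ had two neighbours $a', a''$ inside $A$, then $a', a''$ together with any two vertices of $B$ would close a $C_4$ inside $N(a)$, making $a$ the hub of a $W_5$; and if $B$ carried an edge $b_1b_2$ while $A$ carried a matching edge $a_1a_2$, then $\{b_1,b_2,a_1,a_2\}$ would induce a $K_4 = W_4$. Hence $B$ is independent and $A$ induces a matching. Completeness of the $A$--$B$ join and the absorption of the few vertices outside the core would then follow from extremality through Kelmans-type edge shifts toward the higher-eigenvector endpoints --- each deficiency strictly raising $\rho$, provided the shift preserves wheel-freeness. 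Optimising $ab$ subject to $2a + b = n$ recovers the four congruence cases and yields $G = H_n$.

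The principal obstacle is exactly this structural/stability step. Because $H_n$ is almost regular, the Perron entries of the matching part and of the independent part are asymptotically equal, so the shifts cannot be driven by a coarse eigenvector gap and must each be checked not to create a wheel; several local configurations along the $A$--$B$ boundary have to be ruled out by hand. The tightness is genuine, not an artefact of the estimates: at $n = 7$ one has $H_7 = 2K_2 \nabla 3K_1$ with $\rho_A(H_7) = \tfrac12\bigl(1+\sqrt{49}\bigr) = 4$, while $F = \overline{C_7}$ is $4$-regular and wheel-free (every neighbourhood induces a $P_4$), whence $\rho_A(F) = 4$ as well and uniqueness fails. I would therefore separate off the small orders, settling $n = 7$ --- and any other small cases the asymptotic bounds do not reach --- by direct computation.
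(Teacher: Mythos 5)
Your opening moves match the paper's: the equivalence ``wheel-free iff every neighbourhood induces a forest'' is the paper's Fact 1, and your identity $\rho^2=d_u+\sum_{z\in N(u)}\deg_{G[N(u)]}(z)x_z+\sum_{z\in N_2(u)}|N(z)\cap N(u)|x_z$ with $x_u$ maximal is just the eigenvector-weighted version of the paper's count $R_u=d_u+2e(G[N(u)])+e(N(u),N_2(u))$ of walks of length $2$ from a vertex $u$ maximising the row sum of $A(G)^2$. The small orders ($n\le 10$, including $n=7$ with $F=\overline{C_7}$) are likewise settled by direct computation in the paper. (A minor slip: your closed form $\tfrac12\bigl(1+\sqrt{1+8ab}\bigr)$ only covers $H_n$ of the shape $aK_2\nabla bK_1$; for $n\equiv 2\pmod 4$ the graph $H_n$ carries an isolated vertex on the matching side and needs a $3\times 3$ quotient.)

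The genuine gap is the passage from ``the bulk of $\rho^2\approx n^2/4$ sits in the distance-two sum'' to the exact structure of $G$. You propose to extract a complete bipartite backbone and finish by Kelmans-type shifts, but, as you concede, the Perron entries on the two sides of $H_n$ are asymptotically equal, so no eigenvector gap drives the shifts, and a shift need not preserve wheel-freeness; nothing in the sketch certifies that each ``deficiency'' admits a wheel-free repair that strictly increases $\rho$. Moreover, your structural deductions ($B$ independent, $A$ a matching) only treat the case where $G[N(u)]$ is $P_3$-free; they do not exclude the configurations in which $G[N(u)]$ is a tree containing a $P_3$ (a spider), which are consistent with all of your estimates and occupy the bulk of the paper's argument (its Case 1, with the tree $G(a,b)$ and the subcases $b=0$, $b=1$, $b\ge 2$). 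The paper closes this gap without any stability argument: the exact integer bound $R_u\ge\frac{(n+1)^2-1}{4}$ (Claim 1) together with discriminant computations forces $V(G)=\{u\}\cup N(u)\cup N_2(u)$, $d_u\in\{\frac{n+1}{2},\dots,\frac{n+5}{2}\}$, at most one vertex-disjoint $P_3$ in $G[N(u)]$, and $e(N(u),N_2(u))\ge d_u\bar d_u-\bar d_u-1$; this confines $G$ to an explicit finite list of candidates (Tables 1 and 2 and the graphs $G(a,b,c,d)$), each of which is then compared with $H_n$ via an equitable quotient matrix. To turn your proposal into a proof you would need either to carry out this exact bookkeeping or to supply a different mechanism that eliminates both the $P_3$-in-the-neighbourhood configurations and the near-miss candidates such as $\bigl(\frac{n-1}{4}K_2\cup K_1\bigr)\nabla\frac{n-1}{2}K_1$, whose spectral radius falls below $\rho_A(H_n)$ only by a quotient-matrix comparison, not by any coarse estimate.
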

\begin{figure}[t] 
\centering 
\includegraphics[width=4cm]{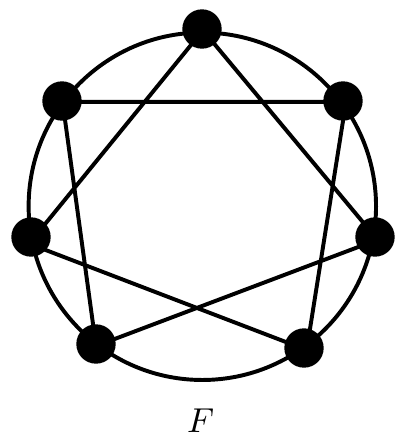} 
\caption{The graph $F$.} 
\label{fig-1}
\end{figure}

Furthermore, we  consider the same problem for the signless Laplacian spectral radius of wheel-free graphs.  Surprisingly,  the following result shows that the extremal graphs are not the same as that of Theorem \ref{thm-1}.

\begin{thm}\label{thm-2}
Let $G$ be a wheel-free graph of order $n \geq 4$. Then 
$$\rho_Q(G) \leq \rho_Q(K_2\nabla (n-2)K_1)=\frac{n+2+\sqrt{(n+2)^2-16}}{2},$$ 
with equality holding if and only if $G=K_2\nabla (n-2)K_1$.
\end{thm}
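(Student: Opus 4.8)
The plan is to fix an \emph{extremal} graph $G$, i.e.\ a wheel-free graph of order $n$ with $\rho_Q(G)$ as large as possible, and to show that $G$ must be the book $K_2\nabla(n-2)K_1$. Throughout I use the basic reformulation of the hypothesis: \emph{$G$ is wheel-free if and only if $G[N(v)]$ is a forest for every $v\in V(G)$} (a cycle inside $N(v)$ is precisely a wheel with hub $v$). In particular $e(G[N(v)])\le d_v-1$ for every $v$. Let $x$ be the Perron eigenvector of $Q(G)$, scaled so that $\max_v x_v=1$, attained at a vertex $u$; first I would record the easy normalizing facts that $G$ is connected and has no isolated vertex (otherwise an edge can be added without creating a wheel, strictly increasing $\rho_Q$). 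The proof then splits into three tasks: (i) show $G$ has a dominating vertex; (ii) deduce $G=K_1\nabla T$ for a spanning tree $T$; (iii) identify $T$ with the star.

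For (i) the starting point is the classical similarity $D^{-1}Q(G)D$, whose row sum at $v$ equals $d_v+m_v$ with $m_v=\frac1{d_v}\sum_{w\sim v}d_w$; since this matrix is nonnegative and similar to $Q(G)$, one gets $\rho_Q(G)\le\max_v(d_v+m_v)$. Writing $\overline N(v)=V(G)\setminus N[v]$ and using $\sum_{w\sim v}d_w=d_v+2e(G[N(v)])+e(N(v),\overline N(v))$ together with $e(G[N(v)])\le d_v-1$ and the trivial $e(N(v),\overline N(v))\le d_v(n-1-d_v)$, the forest bound collapses to the clean estimate
\begin{equation*}
d_v+m_v\le (n+2)-\frac{2}{d_v}\le (n+2)-\frac{2}{\Delta},
\end{equation*}
so that $\rho_Q(G)<n+2$ for every wheel-free $G$; since the book graph certifies $\rho_Q(G)\ge\rho^\ast:=\frac{n+2+\sqrt{(n+2)^2-16}}{2}$, the maximum degree already satisfies $\Delta\gtrsim n/2$. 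Upgrading this to $\Delta=n-1$ is the heart of the argument and the \emph{main obstacle}: here I would exploit that the two crude inequalities $e(G[N(u)])\le d_u-1$ and $e(N(u),\overline N(u))\le d_u(n-1-d_u)$ cannot be simultaneously near-equality in a wheel-free graph. Concretely, if $G[N(u)]$ contains a path $w_1w_2w_3$, then no vertex of $\overline N(u)$ may be joined to all of $w_1,w_2,w_3$, since $uw_1$, $uw_3$ and that common neighbour would build a $C_4$ inside $N(w_2)$, i.e.\ a $W_5$. Such local restrictions let me relocate the edges leaving $N(u)$ onto the high-Perron vertex via Kelmans/Li--Feng type switches (each preserving wheel-freeness in the relevant configuration and not decreasing $\rho_Q$), and iterating forces a vertex of degree $n-1$.

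Once a dominating vertex $u^\ast$ is in hand, task (ii) is short. Wheel-freeness at $u^\ast$ says $G-u^\ast=G[N(u^\ast)]$ is a forest $F$, so $G=K_1\nabla F$. The key observation is that $K_1\nabla F$ is wheel-free for \emph{every} forest $F$: the apex sees only $F$ (a forest), and for $v\in F$ the neighbourhood $N_G(v)=\{u^\ast\}\cup N_F(v)$ induces a star (the $F$-neighbours of $v$ are pairwise non-adjacent in a forest), again a forest. Hence I may add edges inside $F$ to turn it into a spanning tree $T$ while keeping $G$ wheel-free; each added edge strictly increases $\rho_Q$ of the connected graph $K_1\nabla F$, so extremality forces $F=T$ a tree, and $G=K_1\nabla T$.

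Finally, task (iii): among all trees $T$ on $n-1$ vertices, $\rho_Q(K_1\nabla T)$ is uniquely maximized by the star. Since every $K_1\nabla T$ is automatically wheel-free, no extra care is needed and I can run the standard pendant-edge relocation: if $T$ is not a star, pick the vertex $c$ of $T$ of largest Perron weight and a pendant edge $\ell p$ with $p\ne c$, then replace $\ell p$ by $\ell c$; this keeps $T$ a tree and strictly increases $\rho_Q$ by the Li--Feng lemma, so the maximizer is $T=K_{1,n-2}$, that is $G=K_1\nabla K_{1,n-2}=K_2\nabla(n-2)K_1$. For the value, the partition of $V(G)$ into the two vertices of degree $n-1$ and the $n-2$ vertices of degree $2$ is equitable for $Q(G)$, with quotient matrix
\begin{equation*}
B=\begin{pmatrix} n & n-2\\ 2 & 2\end{pmatrix},\qquad \det(\lambda I-B)=\lambda^2-(n+2)\lambda+4,
\end{equation*}
whose largest root is $\rho^\ast=\frac{n+2+\sqrt{(n+2)^2-16}}{2}$; as the partition is equitable this root equals $\rho_Q(G)$, completing the proof. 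I expect task (i), the passage from $\Delta\gtrsim n/2$ to a genuine dominating vertex, to be where essentially all of the difficulty lies.
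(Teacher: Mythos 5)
Your tasks (ii) and (iii) are essentially correct and run parallel to the paper's Case 1: once you know $G=K_1\nabla T$ with $T$ a spanning tree of $N(u^\ast)$, identifying $T$ with the star is routine (the paper does it in one shot by comparing Rayleigh quotients of $Q(K_1\nabla T)$ and $Q(K_2\nabla(n-2)K_1)$ with the same Perron vector, rather than by iterated pendant rotations, but either works, provided you quote the signless Laplacian version of the rotation lemma). The problem is task (i), which you yourself flag as carrying essentially all the difficulty: you never prove that an extremal graph has a dominating vertex. The bound $\rho_Q(G)\le\max_v(d_v+m_v)\le n+2-\frac{2}{\Delta}$ only yields $\Delta\ge\frac{1}{2}\rho^\ast\approx\frac{n+2}{2}$, and the proposed upgrade via ``Kelmans/Li--Feng type switches preserving wheel-freeness'' is not substantiated and is not viable as stated: piling the edges leaving $N(u)$ onto a high-Perron vertex $c$ typically creates cycles inside $N(c)$ or inside the neighbourhoods of common neighbours, i.e.\ exactly a wheel, so each switch would require a case analysis you have not supplied, together with a termination argument. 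The single local restriction you do record (no vertex outside $N[u]$ can see all of a $P_3$ in $N(u)$) is an observation, not a switching scheme. As written, the argument has a hole precisely at its core.

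The paper closes this gap with a different device, which is the one genuinely new idea you are missing: instead of applying a row-sum bound to $D^{-1}Q(G)D$ (which is lossy because of the division by $d_v$), it applies Lemma \ref{row_sum} to the matrix $Q^\ast(G)=Q(G)^2-(n+2)Q(G)+4I_n$. The certificate $\rho_Q(G)\ge\rho^\ast$ says that the maximal row sum $R^\ast_u$ of $Q^\ast(G)$ is $\ge 0$, while the wheel-free estimates $e(G[N(u)])\le d_u-\omega_u$ and $e(N(u),N_2(u))\le d_u\bar d_u\le d_u(n-1-d_u)$ show that $R^\ast_u\le 0$ identically. Forcing equality throughout yields in one stroke that $G[N(u)]$ is a tree, that $V(G)=\{u\}\cup N(u)\cup N_2(u)$, and that $N(u)$ is completely joined to $N_2(u)$. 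If $N_2(u)=\emptyset$ this is your case $K_1\nabla T$; if $N_2(u)\neq\emptyset$, the complete join forces $G[N(u)]$ to be $P_3$-free (a $P_3$ in $N(u)$ together with any $v\in N_2(u)$ adjacent to all of it puts a $C_4$ in the neighbourhood of the middle vertex), hence $G[N(u)]=K_1$ or $K_2$, and the graph collapses to $K_2\nabla(n-2)K_1$ (or reduces to the tree case at the dominating neighbour of $u$). Replacing your $d_v+m_v$ estimate by this quadratic-in-$Q$ row-sum argument is what would repair the proof.
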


\section{Some lemmas}
Let $M$ be a real symmetric matrix of order $n$, and let $[n]=\{1,2,\ldots,n\}$.  Given a partition $\Pi:[n]=X_1\cup X_2\cup \cdots \cup X_k$, the matrix $M$ can be written as 
$$
M=\begin{bmatrix}
M_{1,1}&M_{1,2}&\cdots &M_{1,k}\\
M_{2,1}&M_{2,2}&\cdots &M_{2,k}\\
\vdots& \vdots& \ddots& \vdots\\
M_{k,1}&M_{k,2}&\cdots &M_{k,k}\\
\end{bmatrix}.
$$
If  $M_{i,j}$  has constant row sum $b_{i,j}$ for all $i,j\in\{1,2,\ldots,k\}$, then $\Pi$ is called an \textit{equitable partition} of $M$, and the matrix $B_\Pi=(b_{i,j})_{i,j=1}^k$ is called an \textit{equitable quotient matrix}  of $M$.

\begin{lem}(Brouwer and Haemers \cite[p. 30]{BH}; Godsil and Royle \cite[pp. 196--198]{GR}.)\label{quotient}
Let $M$ be a real symmetric matrix, and let $B_\Pi$ be an equitable quotient matrix of $M$. Then the eigenvalues of  $B_\Pi$ are also eigenvalues of $M$. Furthermore, if $M$ is nonnegative and irreducible, then 
$$\lambda_1(M) = \lambda_1(B_\Pi).$$
\end{lem}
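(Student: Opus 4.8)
The plan is to realize the equitable quotient matrix as the restriction of $M$ to a distinguished invariant subspace and then read off both assertions from a single matrix identity. I would introduce the $n\times k$ \emph{characteristic matrix} $S$ of the partition $\Pi$, whose $(v,i)$ entry equals $1$ if $v\in X_i$ and $0$ otherwise. The defining property of an equitable partition—that each block $M_{i,j}$ has constant row sum $b_{i,j}$—translates into the identity $MS=SB_\Pi$, because the $(v,j)$ entry of $MS$ is $\sum_{u\in X_j}M_{v,u}$, which equals $b_{i,j}$ whenever $v\in X_i$, and this is exactly the $(v,j)$ entry of $SB_\Pi$. I would record this identity first, since everything else follows from it.

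For the first assertion, suppose $B_\Pi x=\mu x$ with $x\neq 0$. Multiplying $MS=SB_\Pi$ on the right by $x$ gives $M(Sx)=S(B_\Pi x)=\mu(Sx)$. Because the columns of $S$ are indicator vectors of the disjoint nonempty cells $X_1,\dots,X_k$, they are linearly independent, so $S$ has full column rank and $Sx\neq 0$. Hence $Sx$ is an eigenvector of $M$ for the eigenvalue $\mu$, which shows that every eigenvalue of $B_\Pi$ is an eigenvalue of $M$; in particular $B_\Pi$ inherits a real spectrum from the symmetric matrix $M$, so $\lambda_1(B_\Pi)$ is well defined and $\lambda_1(B_\Pi)\leq\lambda_1(M)$.

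For the second assertion I would pass to the transpose. Since $M$ is symmetric, taking transposes in $MS=SB_\Pi$ yields $S^{\top}M=B_\Pi^{\top}S^{\top}$. Let $z>0$ be a Perron eigenvector of the nonnegative irreducible matrix $M$, so $Mz=\lambda_1(M)z$ with $\lambda_1(M)$ the spectral radius of $M$. Then $B_\Pi^{\top}(S^{\top}z)=S^{\top}(Mz)=\lambda_1(M)\,(S^{\top}z)$, and the vector $S^{\top}z$ is strictly positive because its $i$-th entry is $\sum_{v\in X_i}z_v>0$. Thus $\lambda_1(M)$ is an eigenvalue of the nonnegative matrix $B_\Pi^{\top}$ possessing a strictly positive eigenvector; by Perron--Frobenius theory, a nonnegative matrix carrying a positive eigenvector has that eigenvalue equal to its spectral radius. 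Hence $\lambda_1(M)$ equals the spectral radius of $B_\Pi^{\top}$, which is the same as that of $B_\Pi$. Since the spectral radius is itself a real eigenvalue of $B_\Pi$, it is at most $\lambda_1(B_\Pi)$, and combining this with $\lambda_1(B_\Pi)\leq\lambda_1(M)$ from the first part sandwiches the quantities and forces $\lambda_1(B_\Pi)=\lambda_1(M)$.

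The routine bookkeeping (the block row-sum computation and the linear independence of the columns of $S$) is straightforward; the step requiring genuine care is the second assertion. The delicate point is that $B_\Pi$ need not be symmetric, so one cannot directly invoke a variational characterization of $\lambda_1(B_\Pi)$; instead one must argue through the spectral radius and the Perron--Frobenius property that a strictly positive eigenvector pins its eigenvalue to the top of the spectrum. Checking that $B_\Pi$ is nonnegative (immediate, as its entries are sums of nonnegative entries of $M$) and that $\lambda_1(B_\Pi)$ is real (guaranteed by the first assertion, since $M$ is symmetric) are the small preliminary verifications that make this Perron--Frobenius argument legitimate.
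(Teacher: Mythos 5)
The paper does not prove this lemma---it is quoted with citations to Brouwer--Haemers and Godsil--Royle---and your argument is precisely the standard proof given in those references: the identity $MS=SB_\Pi$ for the characteristic matrix $S$, full column rank of $S$ for the first claim, and the Perron vector of $M$ pushed through $S^{\top}M=B_\Pi^{\top}S^{\top}$ for the second. Your handling of the delicate point is correct: since $B_\Pi$ need not be symmetric, you rightly avoid a variational argument and instead invoke the fact that a nonnegative matrix with a strictly positive eigenvector has that eigenvalue equal to its spectral radius, which together with $\lambda_1(B_\Pi)\le\lambda_1(M)$ from the first part closes the proof.
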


The following result is straightforward, and one can find a short proof in \cite{EZ}. 
\begin{lem}\label{row_sum}
Let $M$ be a real symmetric matrix with row sums $R_1,R_2,\ldots,R_n$.  Let $\lambda(M)$
be an eigenvalue of $M$ with an eigenvector $x$ all of whose entries are nonnegative. Then
$$\min_{1\leq i \leq n} R_i \leq \lambda(M) \leq \max_{1\leq i\leq n} R_i.$$
Moreover, if all entries of $x$ are positive, then either of the equalities holds if and only if the row sums of
$M$ are all equal.
\end{lem}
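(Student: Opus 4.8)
The plan is to exploit the interplay between the eigenvalue equation $Mx=\lambda(M)x$ and the vector of row sums, mediated by the all-ones vector $\mathbf{1}$. First I would record that $M\mathbf{1}=(R_1,R_2,\ldots,R_n)^{\top}$, so that the row sums appear precisely as the entries of $M\mathbf{1}$. The key identity comes from computing the scalar $x^{\top}M\mathbf{1}$ in two ways: grouping it as $x^{\top}(M\mathbf{1})=\sum_{i=1}^n R_i x_i$, and, using the symmetry $M^{\top}=M$, as $(Mx)^{\top}\mathbf{1}=\lambda(M)\,x^{\top}\mathbf{1}=\lambda(M)\sum_{i=1}^n x_i$. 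Equating the two expressions yields the single relation $\lambda(M)\sum_i x_i=\sum_i R_i x_i$, which drives everything that follows.

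Next I would observe that, since $x$ is a nonzero eigenvector with nonnegative entries, $\sum_i x_i>0$, so I may divide to obtain
$$\lambda(M)=\frac{\sum_{i=1}^n R_i x_i}{\sum_{i=1}^n x_i}=\sum_{i=1}^n w_i R_i, \qquad w_i=\frac{x_i}{\sum_{j=1}^n x_j}.$$
The weights $w_i$ are nonnegative and sum to $1$, so $\lambda(M)$ is a convex combination of $R_1,\ldots,R_n$; this immediately gives the two-sided bound $\min_{1\le i\le n} R_i\le\lambda(M)\le\max_{1\le i\le n} R_i$.

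For the equality statement, assume all entries of $x$ are positive, so every weight $w_i$ is strictly positive. Writing $\max_j R_j-\lambda(M)=\sum_i w_i\bigl(\max_j R_j-R_i\bigr)$, each summand is nonnegative, so the sum vanishes exactly when $w_i\bigl(\max_j R_j-R_i\bigr)=0$ for all $i$; since $w_i>0$, this forces $R_i=\max_j R_j$ for every $i$, i.e. all row sums are equal. The argument for the lower bound is symmetric, starting from $\lambda(M)-\min_j R_j=\sum_i w_i\bigl(R_i-\min_j R_j\bigr)$.

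The computation is entirely routine; the only point deserving care is the equality analysis, where the hypothesis that all entries of $x$ are positive is exactly what upgrades ``a convex combination lies between the extremes'' to ``it meets an extreme only when all the averaged quantities coincide.'' Under mere nonnegativity of $x$, equality could hold without all row sums being equal, since the rows carrying zero weight are left unconstrained; this is precisely why the strict-positivity hypothesis is imposed in the ``moreover'' clause.
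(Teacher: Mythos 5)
Your proof is correct. The paper gives no proof of its own here, deferring to the short proof in \cite{EZ}, and your argument --- using symmetry to compute $x^{\top}M\mathbf{1}$ two ways, obtaining the convex-combination identity $\lambda(M)=\sum_i w_i R_i$ with $w_i=x_i/\sum_j x_j$, and then using strict positivity of the weights to settle the equality case in both directions --- is essentially that same standard argument, so there is nothing to add.
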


\begin{lem}\label{extremal-graph}
Let  $H_n$ be the graph defined in (\ref{eq-1}) with $n\geq 4$. Let $\lambda_1$ denote the largest root of $x^3 - x^2 - \frac{n^2}{4}x + \frac{n}{2}=0$.  Then 
\begin{equation*}
\rho_A(H_n)=\left\{
\begin{array}{ll}
\frac{n+1}{2} & \mbox{if $n\equiv 1~\mathrm{mod}~4$}, \\
\frac{n+1}{2}& \mbox{if $n\equiv 3~\mathrm{mod}~4$}, \\
\frac{\sqrt{n^2+1}+1}{2} & \mbox{if $n\equiv 0~\mathrm{mod}~4$}, \\
\lambda_1>\frac{\sqrt{n^2 - 3}+1}{2} & \mbox{if $n\equiv 2~\mathrm{mod}~4$}.
\end{array}
\right.
\end{equation*}
\end{lem}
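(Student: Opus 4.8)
The plan is to exploit the strong symmetry of $H_n$ through equitable partitions and then apply Lemma \ref{quotient}. In every congruence class $H_n$ consists of a part $A$ (a matching, together with one extra isolated vertex when $n\equiv 2\pmod 4$) completely joined to an independent set $B$. Since both parts are nonempty for $n\ge 4$, the graph $H_n$ is connected, so $A(H_n)$ is nonnegative and irreducible, and Lemma \ref{quotient} gives $\rho_A(H_n)=\lambda_1(B_\Pi)$ for the equitable quotient matrix $B_\Pi$ of a suitable partition. It also guarantees that every eigenvalue of the (possibly non-symmetric) matrix $B_\Pi$ is real, being an eigenvalue of the symmetric matrix $A(H_n)$, so all the ``largest roots'' below are well defined.

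First I would dispose of the three cases $n\equiv 0,1,3\pmod 4$ simultaneously. Here the natural partition has two cells, the matching vertices $V_1$ and the independent vertices $V_2$: each matching vertex has exactly one neighbour in $V_1$ and is joined to all of $V_2$, while each vertex of $V_2$ is joined to all of $V_1$. This yields $B_\Pi=\left[\begin{smallmatrix}1 & |V_2|\\ |V_1| & 0\end{smallmatrix}\right]$, with characteristic polynomial $x^2-x-|V_1|\,|V_2|$ and largest root $\tfrac{1+\sqrt{1+4|V_1||V_2|}}{2}$. Substituting the sizes from (\ref{eq-1}) gives $4|V_1||V_2|=n^2-1$ when $n\equiv 1,3\pmod 4$ (so the root is $\tfrac{1+\sqrt{n^2}}{2}=\tfrac{n+1}{2}$) and $4|V_1||V_2|=n^2$ when $n\equiv 0\pmod 4$ (so the root is $\tfrac{\sqrt{n^2+1}+1}{2}$), matching the first three lines of the statement.

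For $n\equiv 2\pmod 4$ I would refine to a three-cell partition: the matching vertices $V_1$, the single extra vertex $V_2$, and the independent set $V_3$. Counting neighbours across cells gives
$$B_\Pi=\begin{bmatrix} 1 & 0 & \tfrac{n}{2}\\[2pt] 0 & 0 & \tfrac{n}{2}\\[2pt] \tfrac{n-2}{2} & 1 & 0\end{bmatrix},$$
and expanding $\det(xI-B_\Pi)$ along the first row collapses the two linear terms into $\tfrac{n^2}{4}x$, returning exactly $f(x)=x^3-x^2-\tfrac{n^2}{4}x+\tfrac{n}{2}$; hence $\rho_A(H_n)=\lambda_1$ is the largest root of $f$, as claimed.

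The remaining task, and the only genuinely delicate point, is the strict inequality $\lambda_1>\mu$ with $\mu=\tfrac{\sqrt{n^2-3}+1}{2}$. A direct estimate of the cubic root would be unpleasant; instead I would note that $\mu$ is precisely the positive root of the companion quadratic $x^2-x-\tfrac{n^2-4}{4}$, i.e. $\mu^2=\mu+\tfrac{n^2-4}{4}$. Reducing $f(\mu)$ modulo this relation (using $\mu^3=\mu\cdot\mu^2$) cancels every quadratic and cubic term and leaves the linear expression $f(\mu)=\tfrac{n}{2}-\mu=\tfrac{n-1-\sqrt{n^2-3}}{2}$. Since $n\ge 6$ in this case, $(n-1)^2=n^2-2n+1<n^2-3$, so $f(\mu)<0$; as $f$ has positive leading coefficient and $\lambda_1$ is its largest root, this forces $\lambda_1>\mu$, completing the proof. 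The key trick is thus recognizing that the target bound is itself a quadratic-algebraic number, which reduces the cubic obstruction to a trivial linear inequality.
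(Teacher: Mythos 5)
Your proposal is correct and follows essentially the same route as the paper: a two-cell equitable quotient matrix for $n\equiv 0,1,3\pmod 4$ and the three-cell quotient matrix $\bigl[\begin{smallmatrix}1&0&n/2\\0&0&n/2\\ (n-2)/2&1&0\end{smallmatrix}\bigr]$ for $n\equiv 2\pmod 4$, combined with Lemma~\ref{quotient}. Your evaluation of $f(\mu)$ by reducing modulo the quadratic $\mu^2=\mu+\frac{n^2-4}{4}$ is only a cosmetic variant of the paper's direct substitution, yielding the same value $\frac{n-1-\sqrt{n^2-3}}{2}<0$ and hence the same conclusion $\lambda_1>\mu$.
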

\begin{proof}
If $n\equiv 1~\mathrm{mod}~4$, we see that $A(H_n)$ has the equitable quotient matrix 
$$
B_\Pi=\begin{bmatrix}
1& \frac{n+1}{2}\\
\frac{n-1}{2}& 0
\end{bmatrix}.
$$
Then, by  Lemma \ref{quotient}, we  have $\rho_A(H_n)=\lambda_1(B_\Pi)= \frac{n+1}{2}$.  Similarly, for $n\equiv 3~\mathrm{mod}~4$ and $n\equiv 0~\mathrm{mod}~4$, we have $\rho_A(H_n)= \frac{n+1}{2}$ and $\rho_A(H_n)= \frac{\sqrt{n^2+1}+1}{2}$, respectively. For $n\equiv 2~\mathrm{mod}~4$, observe that $A(H_n)$ has the equitable quotient matrix 
$$
B_\Pi=\begin{bmatrix}
1 & 0 & \frac{n}{2}\\
0 & 0 & \frac{n}{2}\\
\frac{n}{2}-1& 1 & 0\\
\end{bmatrix}.
$$
By a simple calculation, the characteristic polynomial of $B_\Pi$ is equal to 
$$\varphi(B_\Pi,x)=x^3 - x^2 - \frac{n^2}{4}x + \frac{n}{2}.$$
Since $\varphi(B_\Pi, 
\frac{\sqrt{n^2 - 3}+1}{2})=\frac{n-\sqrt{n^2 - 3}-1}{2}<0$ due to $n\geq 4$, we have $\lambda_1(B_\Pi)>\frac{\sqrt{n^2 - 3}+1}{2}$, which gives that  $\rho_A(H_n)=\lambda_1(B_\Pi)>\frac{\sqrt{n^2 - 3}+1}{2}$ 
again by Lemma \ref{quotient}.
\end{proof}

\begin{lem}\label{Q-extremal-graph}
For $n\geq 3$, we have
$$\rho_Q(K_2\nabla (n-2)K_1)=\frac{n+2+\sqrt{(n+2)^2-16}}{2}.$$
\end{lem}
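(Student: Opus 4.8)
The plan is to compute $\rho_Q(K_2\nabla(n-2)K_1)$ directly by exhibiting an equitable partition and invoking Lemma \ref{quotient}, exactly as was done for $\rho_A(H_n)$ in Lemma \ref{extremal-graph}. Write $G=K_2\nabla(n-2)K_1$, let $\{v_1,v_2\}$ be the two vertices of the $K_2$ part (so that $v_1v_2\in E(G)$), and let $u_1,\dots,u_{n-2}$ be the $n-2$ independent vertices, each joined to both $v_1$ and $v_2$. A one-line degree count gives $d_{v_1}=d_{v_2}=n-1$ and $d_{u_j}=2$ for every $j$.

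First I would verify that the natural partition $\Pi\colon V(G)=\{v_1,v_2\}\cup\{u_1,\dots,u_{n-2}\}$ is equitable for $Q(G)=D(G)+A(G)$. For a vertex $v_i$ in the first cell, the row of $Q(G)$ restricted to the first cell sums to $d_{v_i}+1=(n-1)+1=n$ (the diagonal degree entry plus the single edge $v_1v_2$), and restricted to the second cell sums to $n-2$ (one for each $u_j$). For a vertex $u_j$ in the second cell, the row sum over the first cell is $2$ (the edges to $v_1$ and $v_2$), while over the second cell it is $d_{u_j}=2$ (only the diagonal entry, since the $u_j$ are pairwise nonadjacent). Since these sums are constant within each block, $\Pi$ is equitable with equitable quotient matrix
$$
B_\Pi=\begin{bmatrix} n & n-2\\[1mm] 2 & 2\end{bmatrix}.
$$

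Next I would compute $\varphi(B_\Pi,x)=(x-n)(x-2)-2(n-2)=x^2-(n+2)x+4$, whose larger root is precisely $\frac{n+2+\sqrt{(n+2)^2-16}}{2}$; note that $(n+2)^2-16\geq 25-16>0$ for $n\geq 3$, so this root is real. Finally, since $G$ is connected (each $u_j$ is adjacent to $v_1$ and $v_2$, and $v_1\sim v_2$), the matrix $Q(G)$ is nonnegative and irreducible, so Lemma \ref{quotient} gives $\rho_Q(G)=\lambda_1(Q(G))=\lambda_1(B_\Pi)=\frac{n+2+\sqrt{(n+2)^2-16}}{2}$, as required.

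Because every step is a short finite computation, I do not expect any genuine obstacle here. The only point that warrants care is checking that each block of $Q(G)$ under $\Pi$ really does have constant row sums---in particular, not forgetting the contribution of the diagonal degree entries and of the edge $v_1v_2$ inside the first cell---since this is exactly what makes $\Pi$ equitable and allows Lemma \ref{quotient} to apply.
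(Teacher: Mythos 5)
Your proof is correct and follows exactly the same route as the paper: both exhibit the equitable partition of $Q(K_2\nabla(n-2)K_1)$ into the two dominating vertices and the $n-2$ independent vertices, obtain the quotient matrix $\left[\begin{smallmatrix} n & n-2\\ 2 & 2\end{smallmatrix}\right]$, and apply Lemma \ref{quotient}. The only difference is that you spell out the row-sum verification and the irreducibility check, which the paper leaves implicit.
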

\begin{proof}
Notice that $Q(K_2\nabla (n-2)K_1)$ has the equitable quotient matrix
$$
B_\Pi=\left(\begin{array}{ccccccc}
n & n-2\\
2& 2\\
\end{array}\right).
$$
Then, by Lemma \ref{quotient}, we have 
$$\rho_Q(K_2\nabla (n-2)K_1)=\lambda_1(B_\Pi)=\frac{n+2+\sqrt{(n+2)^2-16}}{2},$$
as required.
\end{proof}

\section{Proofs of Theorems \ref{thm-1} and \ref{thm-2}}

For a wheel-free graph $G$, the following two facts are obvious.
\begin{fact}\label{fact-1}
For any $v \in V(G)$, $G[N(v)]$ is a forest.
\end{fact}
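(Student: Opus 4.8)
The plan is to prove the statement by contradiction, reading off a forbidden wheel directly from any cycle that might sit inside a neighbourhood. First I would suppose that for some $v \in V(G)$ the induced subgraph $G[N(v)]$ fails to be a forest; then it contains a cycle $C_k = u_1 u_2 \cdots u_k u_1$ with $k \geq 3$, since every cycle in a simple graph has length at least three.

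Next, because all of $u_1, \ldots, u_k$ lie in $N(v)$, the vertex $v$ is adjacent to each of them. Hence the subgraph of $G$ on the vertex set $\{v, u_1, \ldots, u_k\}$ contains $K_1 \nabla C_k = W_{k+1}$ as a subgraph. As $k \geq 3$ forces $k + 1 \geq 4$, this is a wheel of the forbidden type, contradicting the hypothesis that $G$ is wheel-free (i.e.\ $\{W_\ell : \ell \geq 4\}$-free). I would therefore conclude that $G[N(v)]$ is acyclic, that is, a forest.

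I do not expect any real obstacle here: the entire content is the observation that a cycle in the neighbourhood of $v$, together with the common neighbour $v$, is by definition a wheel. No spectral input, estimate, or case analysis is needed. The only point to keep straight is the index bookkeeping — that the cycle has at least three vertices, so that the resulting wheel $W_\ell$ satisfies $\ell \geq 4$ and is genuinely among the excluded graphs.
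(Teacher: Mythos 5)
Your proof is correct and is exactly the argument the paper has in mind: the paper states this fact without proof as ``obvious,'' and the intended justification is precisely that a cycle $C_k$ ($k\geq 3$) inside $N(v)$ together with $v$ yields $K_1\nabla C_k=W_{k+1}$ with $k+1\geq 4$, contradicting wheel-freeness. Your bookkeeping on the indices and on subgraph (rather than induced subgraph) containment is accurate, so nothing is missing.
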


\begin{fact}\label{fact-2}
For any two distinct $u, v\in V(G)$,  $G[N(u)\cap N(v)]$ is  $P_3$-free. Furthermore, if  $uv\in E(G)$ then $G[N(u)\cap N(v)]$ is $K_2$-free.
\end{fact}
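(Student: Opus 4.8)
The plan is to prove both assertions by contradiction, in each case exhibiting a concrete wheel subgraph that violates wheel-freeness. The guiding principle is that $W_4 = K_1 \nabla C_3 = K_4$ and $W_5 = K_1 \nabla C_4$, so it suffices to produce either a $K_4$ or a $4$-cycle together with an external vertex adjacent to all four of its vertices.

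For the first claim, I would suppose toward a contradiction that $G[N(u) \cap N(v)]$ contains a $P_3$, say on vertices $a, b, c$ with edges $ab$ and $bc$. Since $a, b, c \in N(u) \cap N(v)$, each of them is adjacent to both $u$ and $v$. The key step is to notice that $u, a, v, c$ span a $4$-cycle $u - a - v - c - u$ (its edges $ua, av, vc, cu$ all come from the neighborhood membership of $a$ and $c$), while the central vertex $b$ is adjacent to all four of $u, v, a, c$: to $u$ and $v$ because $b \in N(u) \cap N(v)$, and to $a$ and $c$ because these are precisely the edges of the path. This gives a copy of $W_5$, contradicting that $G$ is wheel-free.

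For the second claim, assuming $uv \in E(G)$, I would suppose that $G[N(u) \cap N(v)]$ contains an edge $ab$. Then $u, v, a, b$ are pairwise adjacent---$uv$ by assumption, $ua, ub, va, vb$ because $a, b \in N(u) \cap N(v)$, and $ab$ by assumption---so they induce a $K_4 = W_4$, again a contradiction.

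I do not expect any genuine obstacle here: the statement is essentially a direct reading of the definition of a wheel. The only point requiring a moment's care is verifying that the vertices in each configuration are pairwise distinct, so that the claimed subgraphs really are wheels; this follows since $u \neq v$, each of $a, b, c$ lies in $N(u) \cap N(v)$ (hence differs from $u$ and $v$), and $a, b, c$ are distinct as the vertices of a $P_3$. The substantive ``work,'' such as it is, lies entirely in choosing the right $4$-cycle in the first part.
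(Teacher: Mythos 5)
Your proof is correct and is exactly the direct verification the paper has in mind (the paper states Fact 2 without proof, calling it obvious): a $P_3$ in $N(u)\cap N(v)$ yields a $W_5$ with hub the middle vertex of the path, and an edge in $N(u)\cap N(v)$ together with the edge $uv$ yields a $K_4=W_4$. The distinctness check you note is the only detail worth recording, and you handle it properly.
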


First we shall give the proof  of Theorem \ref{thm-1}. 

\renewcommand\proofname{\bf Proof of Theorem \ref{thm-1}}
\begin{proof}
Notice that  both  $H_n$ and $F$  are wheel-free. By using Sagemath v9.1\footnote{The code was uploaded to \url{https://github.com/XueyiHuang/Wheel-free-Graph.git}.} \cite{ST}, we find that,  for $n\leq 10$ and $n\neq 7$,  $H_n$  is the unique graph attaining the maximum spectral radius among all wheel-free graphs of order $n$, and for $n=7$, there is another extremal graph $F$ (see Figure \ref{fig-1}), which satisfies $\rho_A(F)=\rho_A(H_7)=4$. From now on, we  take $n\geq 11$ and  assume that $G$ is a wheel-free graph of order $n$ with maximum spectral radius.  We assert that $G$ is connected. If not, suppose that $G_1,\ldots,G_\omega$ are the components of $G$. Then we can add $\omega-1$ edges to $G$ so that the obtained graph $G^*$ is connected and wheel-free. By using the Rayleigh quotient and the Perron-Frobenius theorem, we can deduce that $\rho_A(G^*)>\rho_A(G)$, contrary to the assumption.

For any $v\in V(G)$, we denote by $\omega_v$ the number of components  in $G[N(v)]$, $\bar{d}_v=|N_2(v)|$ the number of vertices at distance $2$ from $v$, and $R_v$  the row sum of $A(G)^2$ corresponding to $v$.  Notice that $R_v$ is exactly the number of walks of length $2$  originating at  $v$.  Thus 
\begin{equation}\label{eq-2}
R_v=d_v+2e(G[N(v)])+e(N(v),N_2(v))
\end{equation}
for any $v\in V(G)$. Take $u\in V(G)$ such that $R_u=\max_{v\in V(G)}R_v$.
We have the following three claims.
\begin{claim}\label{claim-1}
$R_u\geq \frac{(n+1)^2-1}{4}$.
\end{claim}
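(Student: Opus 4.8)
The plan is to bound $R_u$ from below by $\rho_A(G)^2$ and then exploit the extremality of $G$ together with the explicit eigenvalues recorded in Lemma \ref{extremal-graph}. Since $G$ is connected, the Perron--Frobenius theorem supplies a positive eigenvector $x$ of $A(G)$ with $A(G)x=\rho_A(G)x$; squaring gives $A(G)^2x=\rho_A(G)^2x$. The matrix $A(G)^2$ is real symmetric and nonnegative, and its $v$-th row sum is precisely $R_v$, the number of walks of length $2$ starting at $v$. Applying Lemma \ref{row_sum} to $M=A(G)^2$ with the nonnegative eigenvector $x$ yields
\begin{equation*}
\rho_A(G)^2\leq \max_{v\in V(G)}R_v=R_u.
\end{equation*}
Because $H_n$ is itself wheel-free and $G$ was chosen to maximize the spectral radius, we have $\rho_A(G)\geq \rho_A(H_n)$, and therefore $R_u\geq \rho_A(H_n)^2$.

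It then remains to check, according to the residue of $n$ modulo $4$, that $\rho_A(H_n)^2\geq \frac{(n+1)^2-1}{4}$, using the values from Lemma \ref{extremal-graph}. When $n\equiv 1$ or $3\pmod 4$ we have $\rho_A(H_n)=\frac{n+1}{2}$, so $R_u\geq \frac{(n+1)^2}{4}>\frac{(n+1)^2-1}{4}$ at once. When $n\equiv 0\pmod 4$, we have $\rho_A(H_n)=\frac{\sqrt{n^2+1}+1}{2}$, and a short manipulation reduces the desired inequality to $\sqrt{n^2+1}\geq n-1$, which is clear for all $n\geq 1$. In these three cases the spectral bound alone settles the claim.

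The delicate case is $n\equiv 2\pmod 4$, which I expect to be the main obstacle: here $\rho_A(H_n)=\lambda_1$ is the largest root of $\varphi(x)=x^3-x^2-\frac{n^2}{4}x+\frac{n}{2}$, and in fact $\lambda_1^2$ is \emph{strictly} smaller than $\frac{(n+1)^2-1}{4}$, so the spectral estimate falls just short. To close the gap I would use the integrality of $R_u$, which counts walks and hence is a nonnegative integer. Evaluating $\varphi$ at $y_0=\frac{\sqrt{n^2+2n-4}}{2}$ and simplifying gives $\varphi(y_0)=\frac{n-2}{2}\bigl(y_0-\frac{n+2}{2}\bigr)$, which is negative for $n\geq 4$ since $y_0<\frac{n+2}{2}$. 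As $\lambda_1$ is the largest root of $\varphi$ and $\varphi$ is positive beyond it, $\varphi(y_0)<0$ forces $y_0<\lambda_1$, whence $\rho_A(H_n)^2=\lambda_1^2>\frac{n^2+2n-4}{4}$. Combining this with $R_u\geq \rho_A(H_n)^2$ gives $R_u>\frac{n^2+2n-4}{4}$, and since both $R_u$ and $\frac{n^2+2n-4}{4}$ are integers when $n\equiv 2\pmod 4$, the integer $R_u$ must be at least the next integer, namely $\frac{n^2+2n-4}{4}+1=\frac{(n+1)^2-1}{4}$. The real work is thus in pinning $\lambda_1^2$ strictly between two consecutive integers so that integrality can do the rest.
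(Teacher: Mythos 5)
Your proof is correct and follows essentially the same route as the paper: bound $R_u\geq\rho_A(G)^2\geq\rho_A(H_n)^2$ via Lemma \ref{row_sum} applied to $A(G)^2$, dispose of $n\not\equiv 2\pmod 4$ directly, and use the integrality of $R_u$ to close the gap when $n\equiv 2\pmod 4$. The only cosmetic difference is that in the last case you re-derive a lower bound $\lambda_1>\tfrac{\sqrt{n^2+2n-4}}{2}$ by evaluating $\varphi$ at that point, whereas the paper simply reuses the bound $\lambda_1>\tfrac{\sqrt{n^2-3}+1}{2}$ already recorded in Lemma \ref{extremal-graph}; both land on the same integer threshold.
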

\renewcommand\proofname{\bf Proof}
\begin{proof}
By assumption, we find that $\rho_A(G)\geq \rho_A(H_n)$. If $n\not\equiv 2~\mathrm{mod}~4$, from  
the Perron-Frobenius theorem, Lemma \ref{row_sum} and Lemma \ref{extremal-graph} we immediately deduce that 
$$R_u\geq \rho_A^2(G)\geq \rho_A^2(H_n)\geq \frac{(n+1)^2}{4}.$$ Similarly, for $n\equiv 2~\mathrm{mod}~4$, i.e.,  $n=4k+2$ with $k\in \mathbb{Z}$,  we have 
$$
R_u>\frac{(\sqrt{n^2 - 3}+1)^2}{4}>\frac{(n+1)^2}{4}-1=4k^2+6k+\frac{5}{4}.
$$
Since $R_u$ is an integer, we conclude that 
$$
R_u\geq 4k^2+6k+2=\frac{(n+1)^2-1}{4}.
$$
This proves Claim \ref{claim-1}.
\end{proof}

\begin{claim}\label{claim-2}
$\bar{d}_u=n-1-d_u$, or equivalently, $V(G)=\{u\}\cup N(u)\cup N_2(u)$.
\end{claim}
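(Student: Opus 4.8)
The plan is to prove the equivalent statement $N_{\geq 3}(u):=V(G)\setminus(\{u\}\cup N(u)\cup N_2(u))=\emptyset$, i.e.\ that $u$ has eccentricity at most $2$. Since $\{u\}$, $N(u)$, $N_2(u)$ and $N_{\geq 3}(u)$ partition $V(G)$, we have $n=1+d_u+\bar{d}_u+|N_{\geq 3}(u)|$, so that $\bar{d}_u=n-1-d_u$ holds precisely when $N_{\geq 3}(u)=\emptyset$. I would argue by contradiction: assume $N_{\geq 3}(u)\neq\emptyset$, which forces the vertex-count deficit $d_u+\bar{d}_u\leq n-2$.

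The main step is to convert this deficit into an upper bound on $R_u$ that contradicts Claim \ref{claim-1}. Starting from the walk identity (\ref{eq-2}), I would bound the two nontrivial terms on the right: by Fact \ref{fact-1} the graph $G[N(u)]$ is a forest on $d_u$ vertices, hence $e(G[N(u)])\leq d_u-1$; and trivially $e(N(u),N_2(u))\leq d_u\bar{d}_u$, since every such edge joins one of the $d_u$ vertices of $N(u)$ to one of the $\bar{d}_u$ vertices of $N_2(u)$. Substituting gives $R_u\leq d_u+2(d_u-1)+d_u\bar{d}_u=3d_u-2+d_u\bar{d}_u$. Using $\bar{d}_u\leq n-2-d_u$ and reading the result as a downward parabola in $d_u$,
$$R_u\leq 3d_u-2+d_u(n-2-d_u)=-\Big(d_u-\tfrac{n+1}{2}\Big)^2+\frac{(n+1)^2}{4}-2\leq \frac{(n+1)^2}{4}-2.$$
This contradicts Claim \ref{claim-1}, which gives $R_u\geq \frac{(n+1)^2-1}{4}=\frac{(n+1)^2}{4}-\frac14$, because $-2<-\frac14$. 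Hence $N_{\geq 3}(u)=\emptyset$, proving the claim.

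The only place where I expect to need care is the choice of estimates: one might worry that the crude bound $e(N(u),N_2(u))\leq d_u\bar{d}_u$ is too weak, and that the wheel-free hypothesis (through Fact \ref{fact-2}) must be invoked for the cross edges. The computation above shows this is \emph{not} necessary — the single structural input $e(G[N(u)])\leq d_u-1$ together with the deficit $d_u+\bar{d}_u\leq n-2$ already suffices, because the quadratic in $d_u$ peaks at exactly the value $\frac{(n+1)^2}{4}$ that Claim \ref{claim-1} essentially attains, and the additive slack ($-2$ against $-\frac14$) closes the gap uniformly in $n$. In effect, the refined constant in Claim \ref{claim-1} (the extra $-\frac14$ arising from the $n\equiv 2\pmod 4$ case) is precisely what keeps the strict inequality robust for every $n\geq 11$.
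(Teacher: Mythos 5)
Your proof is correct and is essentially the paper's own argument in contrapositive form: both use $e(G[N(u)])\leq d_u-1$ (Fact \ref{fact-1}), the trivial bound $e(N(u),N_2(u))\leq d_u\bar{d}_u$, and an AM--GM/quadratic-maximum step to cap $R_u$ by $\frac{(d_u+\bar{d}_u+3)^2}{4}-2$, which Claim \ref{claim-1} then forces to exceed $\frac{(n+1)^2-1}{4}$ only when $d_u+\bar{d}_u=n-1$. The arithmetic checks out, including the observation that the extra $-\frac14$ slack in Claim \ref{claim-1} is what the argument needs.
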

\begin{proof}
Notice that $e(G[N(u)])=d_u-\omega_u$ by Fact \ref{fact-1}. According to (\ref{eq-2}), we have
$$
\begin{aligned}
R_u&\leq d_u+2(d_u-\omega_u)+d_u\bar{d}_u\leq d_u(\bar{d}_u+3)-2\leq \frac{(d_u+\bar{d}_u+3)^2}{4}-2.
\end{aligned}
$$
Combining this with Claim \ref{claim-1}, we get
$$
(n+1)^2<(d_u+\bar{d}_u+3)^2\leq (n+2)^2,
$$
which implies that $d_u+\bar{d}_u=n-1$ because $d_u+\bar{d}_u$ is an integer. 
\end{proof}

\begin{claim}\label{claim-3}
Let $p_u$ be the number of vertex-disjoint copies of $P_3$ in $G[N(u)]$. We have $p_u\leq 1.$
\end{claim}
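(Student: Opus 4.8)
The plan is to argue by contradiction, showing that $p_u\ge 2$ is incompatible with the lower bound $R_u\ge\frac{(n+1)^2-1}{4}$ supplied by Claim \ref{claim-1}. The whole argument runs through the walk-count identity (\ref{eq-2}): I would extract from the hypothesis $p_u\ge 2$ a sharpened upper bound on the cross-term $e(N(u),N_2(u))$, feed it back into (\ref{eq-2}) together with Claim \ref{claim-2}, and obtain an upper estimate for $R_u$ that is strictly below the Claim \ref{claim-1} bound for $n\ge 11$.

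The first ingredient is a bound on the edges from $N_2(u)$ into $N(u)$ coming from Fact \ref{fact-2}. For any $w\in N_2(u)$ we have $w\neq u$ and $uw\notin E(G)$, so $G[N(u)\cap N(w)]$ is $P_3$-free, i.e.\ it has maximum degree at most $1$. Since $N(u)\cap N(w)$ is exactly the set of neighbours of $w$ inside $N(u)$, this yields $|N(u)\cap N(w)|\le s$, where $s$ denotes the largest size of a subset of $N(u)$ inducing (in $G[N(u)]$) a subgraph of maximum degree at most $1$. Summing over $w\in N_2(u)$ gives $e(N(u),N_2(u))\le s\,\bar d_u$. The second ingredient converts the hypothesis into a bound on $s$: if $G[N(u)]$ contains two vertex-disjoint copies of $P_3$, then any subset of $N(u)$ inducing a subgraph of maximum degree at most $1$ must omit at least one vertex from each copy (the middle vertex of a retained $P_3$ would have degree $2$), and since the two copies are vertex-disjoint at least two vertices are omitted, so $s\le d_u-2$.

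Assembling these, Fact \ref{fact-1} gives $e(G[N(u)])=d_u-\omega_u\le d_u-1$ and Claim \ref{claim-2} gives $\bar d_u=n-1-d_u$. Substituting into (\ref{eq-2}) yields
$$R_u\le 3d_u-2+(d_u-2)(n-1-d_u)=-d_u^2+(n+4)d_u-2n,$$
a downward parabola in $d_u$ whose maximum value is $\frac{n^2+16}{4}$. For $n\ge 11$ this is strictly less than $\frac{(n+1)^2-1}{4}=\frac{n^2+2n}{4}$, contradicting Claim \ref{claim-1}; hence $p_u\le 1$.

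The main delicate points, rather than any heavy computation, are conceptual. First, one must read Fact \ref{fact-2} correctly as forcing the common neighbourhood to be a matching (\emph{maximum degree at most} $1$), since it is precisely this that turns the common-neighbour condition into the uniform per-vertex bound $|N(u)\cap N(w)|\le s$. Second, one must check that two vertex-disjoint $P_3$'s really cost two full units of $s$, which is where their vertex-disjointness is used. Finally, both the forest structure from Fact \ref{fact-1} and the exact identity $\bar d_u=n-1-d_u$ from Claim \ref{claim-2} are indispensable: without pinning $\bar d_u$ the resulting quadratic in $d_u$ would not be forced below the threshold of Claim \ref{claim-1}.
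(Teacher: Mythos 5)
Your proof is correct and follows essentially the same route as the paper: Fact \ref{fact-2} forces each $w\in N_2(u)$ to miss at least one vertex of each of the two disjoint $P_3$'s, giving $e(N(u),N_2(u))\le (d_u-2)\bar d_u$, which combined with Fact \ref{fact-1}, Claim \ref{claim-2} and Claim \ref{claim-1} yields the same impossible quadratic $-d_u^2+(n+4)d_u-2n\ge \frac{(n+1)^2-1}{4}$. The only cosmetic difference is that you package the per-vertex bound through the quantity $s$ and compare parabola maxima rather than computing a discriminant.
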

\begin{proof}
By Fact \ref{fact-2},  each vertex (if any) of $N_2(u)$ is adjacent to at most two vertices of any $P_3$ of $G[N(u)]$. Thus, if $p_u\geq 2$, we have
$$
\begin{aligned}
R_u&\leq d_u +2(d_u-\omega_u)+(d_u-p_u)\bar{d}_u\\
&\leq d_u +2(d_u-1)+(d_u-2)\bar{d}_u\\
&=-d_u^2+(n+4)d_u-2n
\end{aligned}
$$
by (\ref{eq-2}) and Claim \ref{claim-2}. Combining the above inequality with Claim \ref{claim-1} yields that $$d_u^2-(n+4)d_u+2n+\frac{(n+1)^2-1}{4}\leq 0,$$ which is impossible because $\Delta=(n+4)^2-4(2n+\frac{(n+1)^2-1}{4})=16-2n<0$. Thus we must have $p_u\leq 1$.
\end{proof}

By Claim \ref{claim-3}, it suffices to consider the following two cases.

{\flushleft \bf Case 1.} $p_u=1.$

\begin{figure}[t] 
\centering 
\includegraphics[width=6cm]{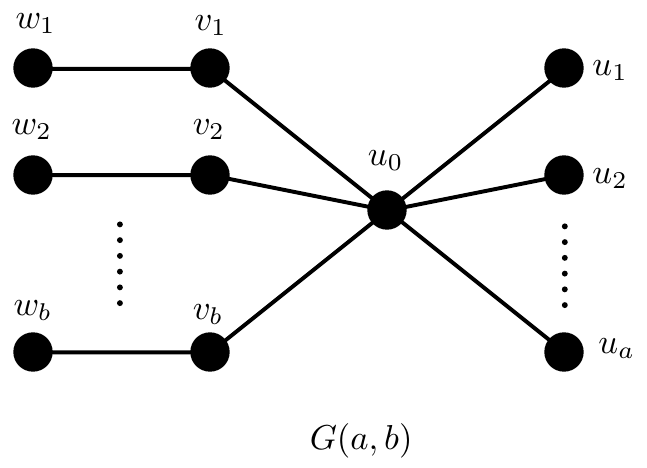} 
\caption{The graph $G(a,b)$.} 
\label{fig-2}
\end{figure}

In this case, we see that  $G$ has the  following six properties:
\begin{enumerate}[(P1)]
\item $d_u\in\{\frac{n+1}{2},\frac{n+3}{2},\frac{n+5}{2}\}$ when $n$ is odd, and  $d_u\in\{\frac{n+2}{2},\frac{n+4}{2}\}$ when $n$ is even;
\item $G[N(u)]=G(a,b)$ (see Figure \ref{fig-2}), where $a,b\geq 0$ and $a+2b+1=d_u\geq 6$;
\item $G[N_2(u)]=cK_2\cup dK_1$, where $c,d\geq 0$ and  $2c+d=\bar{d}_u$;
\item $e(N(u),N_2(u))=d_u\bar{d}_u-\bar{d}_u$ or $d_u\bar{d}_u-\bar{d}_u-1$;
\item for any $v\in N_2(u)$ and $P_3\subseteq G[N(u)]$, we have $|N(v)\cap V(P_3)|=1$ or $2$,  and for any fixed $P_3\subseteq G[N(u)]$, there is at most one  $v\in N_2(u)$ such that $|N(v)\cap V(P_3)|=1$;
\item each vertex of  $N_2(u)$ is not adjacent to  $u_0$,  where $u_0$ is the central vertex of $G[N(u)]=G(a,b)$ shown in Figure \ref{fig-2}.
\end{enumerate}

For (P1), by Fact \ref{fact-1}, Fact \ref{fact-2} and Claim \ref{claim-2},  we get
$$
\begin{aligned}
R_u&=d_u+2e(G[N(u)])+e(N(u),N_2(u))\\
&\leq d_u +2(d_u-1)+d_u\bar{d}_u-\bar{d}_u\\
&=-d_u^2+(n+3)d_u-n-1,
\end{aligned}
$$
Combining this with Claim \ref{claim-1}, we obtain $\frac{n+3-\sqrt{5}}{2}\leq d_u\leq \frac{n+3+\sqrt{5}}{2}$. Considering that $d_u$ is an integer, we may conclude that $d_u\in\{\frac{n+1}{2},\frac{n+3}{2},\frac{n+5}{2}\}$ when $n$ is odd, and $d_u\in\{\frac{n+2}{2},\frac{n+4}{2}\}$ when $n$ is even. For (P2), if $G[N(u)]$ is disconnected, i.e., $\omega_u\geq 2$, we have 
$$
\begin{aligned}
R_u&=d_u+2(d_u-\omega_u)+e(N(u),N_2(u))\\
&\leq d_u +2(d_u-2)+d_u\bar{d}_u-\bar{d}_u\\
&=-d_u^2+(n+3)d_u-n-3.
\end{aligned}
$$
Combining this with Claim \ref{claim-1} yields that
$$
d_u^2-(n+3)d_u+n+3+\frac{(n+1)^2-1}{4}\leq 0,
$$
which is impossible because $\Delta=(n+3)^2-4(n+3+\frac{(n+1)^2-1}{4})=-3<0$. Thus $G[N(u)]$ is a tree by Fact \ref{fact-1}.  Furthermore, since $G[N(u)]$ has exactly one vertex-disjoint copy of $P_3$, we immediately deduce that $G[N(u)]=G(a,b)$ (see Figure \ref{fig-2}), where $a,b$ are  nonnegative integers such that $a+2b+1=d_u$. Notice that $d_u\geq 6$ follows from (P1) and $n\geq 11$. For (P3), it suffices to prove that $G[N_2(u)]$ is $P_3$-free. By contradiction, assume that there exists some copy of $P_3$ (say $v_1v_2v_3$) in $G[N_2(u)]$. As $p_u=1$, we also can take a copy of $P_3$ (say $u_1u_2u_3$) in $G[N(u)]$.  By Fact \ref{fact-2}, each vertex of $N_2(u)$ is adjacent to at most two vertices of $\{u_1,u_2,u_3\}$, and there is at most one  $w\in N(u)\setminus\{u_1,u_2,u_3\}$ such that $\{v_1,v_2,v_3\}\subseteq N(w)$. Thus we have $e(N(u),N_2(u))\leq d_u\bar{d}_u-\bar{d}_u-(d_u-4)$, and 
$$
R_u\leq d_u +2(d_u-1)+d_u\bar{d}_u-\bar{d}_u-(d_u-4)=-d_u^2+(n+2)d_u-n+3.
$$
by Fact  \ref{fact-1} and Claim \ref{claim-2}. 
Combining this with Claim \ref{claim-1} yields that
$$
d_u^2-(n+2)d_u+n-3+\frac{(n+1)^2-1}{4}\leq 0,
$$
which is impossible because $\Delta=(n+2)^2-4(n-3+\frac{(n+1)^2-1}{4})=-2n+16<0$. This proves (P3). For (P4), if $e(N(u),N_2(u))\leq d_u\bar{d}_u-\bar{d}_u-2$, as in (P2), we also can deduce a contradiction. Thus the result follows because we have known  that $e(N(u),N_2(u))\leq d_u\bar{d}_u-\bar{d}_u$ by Fact \ref{fact-2}. For (P5),  it is clear that $|N(v)\cap V(P_3)|\leq 2$ by Fact \ref{fact-2}. Also, if  $|N(v)\cap V(P_3)|=0$ or there are two vertices $v_1,v_2\in V(N_2(u))$ such that $|N(v_1)\cap V(P_3)|=|N(v_2)\cap V(P_3)|=1$, then $e(N(u),N_2(u))\leq d_u\bar{d}_u-\bar{d}_u-2$, which contradicts (P4). For (P6), suppose to the contrary that  there exists some $v\in N_2(u)$ such that $u_0\in N(v)$. If $a=0$, then $b=\frac{d_u-1}{2}$, and we have  $|N(v)\cap N(u)|\leq b+1=\frac{d_u+1}{2}$  by Fact \ref{fact-2}. Hence, 
$$
\begin{aligned}
e(N(u),N_2(u))&\leq d_u\bar{d}_u-(\bar{d}_u-1)-(d_u-|N(v)\cap N(u)|)\\
&\leq d_u\bar{d}_u-\bar{d}_u-\frac{1}{2}(d_u-3)\\
&<d_u\bar{d}_u-\bar{d}_u-1,
\end{aligned}
$$
contrary to (P4). Similarly, if $a\geq 1$,  then $|N(v)\cap N(u)|\leq b+2\leq \frac{d_u}{2}+1$ and 
$$
e(N(u),N_2(u))\leq d_u\bar{d}_u-(\bar{d}_u-1)-(d_u-|N(v)\cap N(u)|)\leq d_u\bar{d}_u-\bar{d}_u-\frac{d_u}{2}+2.
$$
Thus we have 
$$
\begin{aligned}
R_u&\leq d_u +2(d_u-1)+d_u\bar{d}_u-\bar{d}_u-\frac{d_u}{2}+2=-d_u^2+\frac{2n+5}{2}d_u-n+1.
\end{aligned}
$$
Combining this with Claim \ref{claim-1}, we obtain
$$
\begin{aligned}
d_u^2-\frac{2n+5}{2}d_u+n-1+\frac{(n+1)^2-1}{4}\leq 0,
\end{aligned}
$$
which is impossible because $\Delta=(\frac{2n+5}{2})^2-4(n-1+\frac{(n+1)^2-1}{4})=\frac{41}{4}-n<0$ due to  $n\geq 11$. This proves (P6).

According to (P2)--(P6), we see that $G$ must be of the form $G(a,b,c,d)$ or $G(a,b,c,d)-e$,  where $G(a,b,c,d)$ is shown in Figure \ref{fig-3}, and $e$ is some edge between $N(u)$ and $N_2(u)$ in $G(a,b,c,d)$. Notice that $\rho_A(G(a,b,c,d))>\rho_A(G(a,b,c,d)-e)$.  We consider the following three situations.

\begin{figure}[t] 
\centering 
\includegraphics[width=12cm]{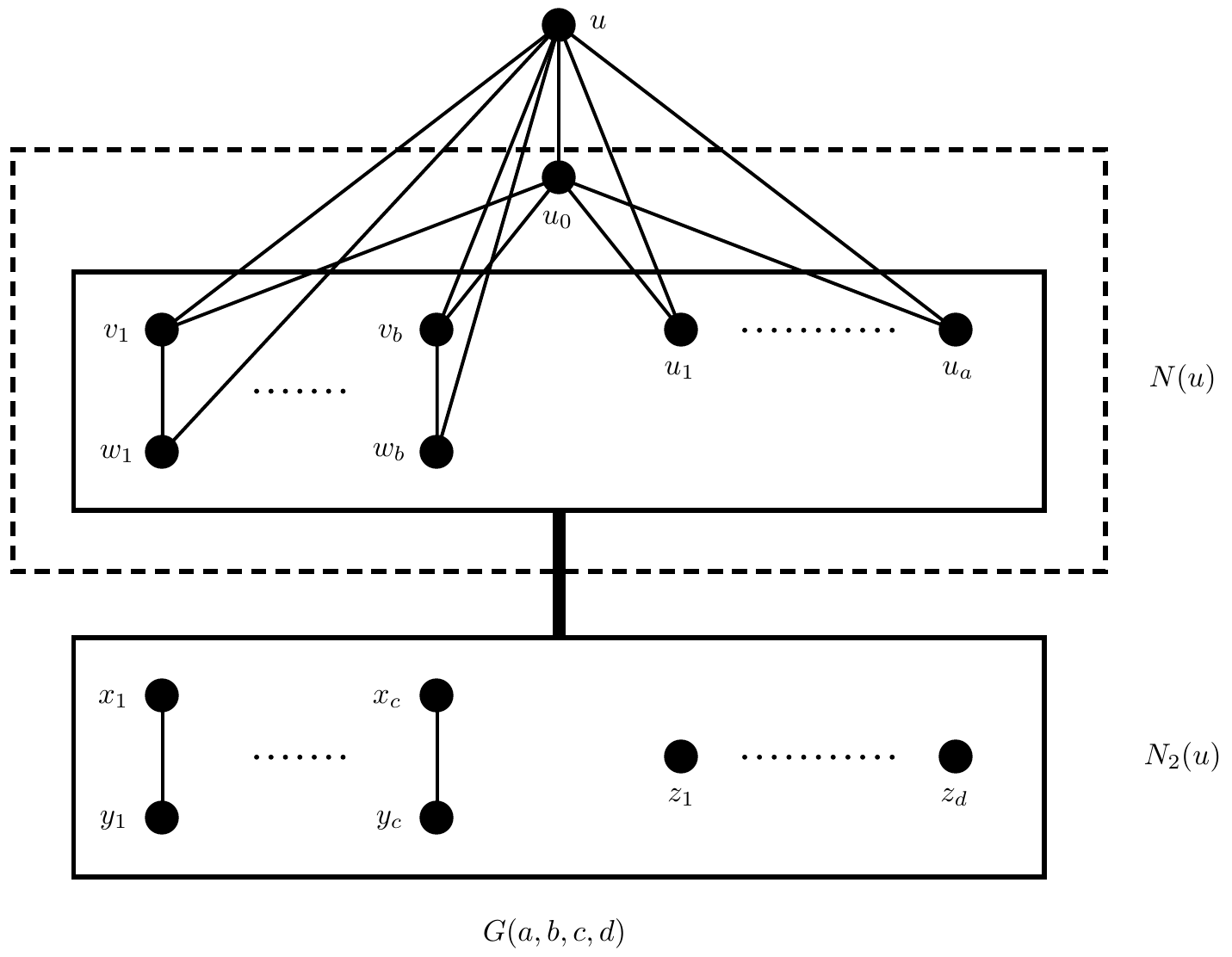} 
\caption{The graph $G(a,b,c,d)$, where the thickest line  represents the connection of  all edges between  $N(u)\setminus\{u_0\}$ and $N_2(u)$.} 
\label{fig-3}
\end{figure}

{\flushleft \bf Subcase 1.1.} $b=0$.

 In this situation,  we have $G=G(d_u-1,0,c,d)$ or $G(d_u-1,0,c,d)-e$, which are both wheel-free. 
Thus we conclude that  $G=G(d_u-1,0,c,d)$ by considering the fact that $G$ has the maximum spectral  radius among all wheel-free graphs of order $n$. For the same reason, we assert that $G=G(d_u-1,0,\lfloor\frac{\bar{d}_u}{2}\rfloor,\bar{d}_u-2\lfloor\frac{\bar{d}_u}{2}\rfloor)=(\lfloor\frac{\bar{d}_u+2}{2}\rfloor K_2\cup (\bar{d}_u-2\lfloor\frac{\bar{d}_u}{2}\rfloor) K_1)\nabla (d_u-1)K_1$.  Recall that $\bar{d}_u=n-1-d_u$. Then, by (P1), we can determine all possible forms of $G$, which are listed in Table \ref{tab-1}.

For $n\equiv 1~\mathrm{mod}~4$, we shall  prove that 
\begin{equation}\label{eq-3}
\left\{
\begin{aligned}
&\rho_A\Big(\Big(\frac{n-1}{4}K_2\cup K_1\Big)\nabla \frac{n-1}{2}K_1\Big)<\rho_A(H_n), \\&\rho_A\Big(\Big(\frac{n-5}{4}K_2\cup K_1\Big)\nabla \frac{n+3}{2}K_1\Big)<\rho_A(H_n).
\end{aligned}
\right.
\end{equation}
Observe that $A((\frac{n-1}{4}K_2\cup K_1)\nabla \frac{n-1}{2}K_1)$ has the  equitable quotient matrix
$$
B_\Pi=\begin{bmatrix}
1 & 0 &\frac{n-1}{2}\\
 0 & 0 & \frac{n-1}{2}\\
 \frac{n-1}{2} &1 &0
\end{bmatrix},
$$
of which the characteristic polynomial is equal to
$$
\varphi(B_\Pi,x)=x^3 - x^2 -\frac{n^2-1}{4}x + \frac{n-1}{2}.
$$
As $\varphi(B_\Pi,\frac{n+1}{2})=\frac{n-1}{2}>0$, we have $\lambda_1(B_\Pi)<\frac{n+1}{2}$ or $\lambda_2(B_\Pi)>\frac{n+1}{2}$. We claim that the later case cannot occur, since otherwise we have $\lambda_3(B_\Pi)<-n$ by considering the trace of $B_\Pi$, which is impossible because $\varphi(B_\Pi,-n)=- \frac{3}{4}n^3 - n^2 + 
\frac{1}{4}n - \frac{1}{2}<0$. It follows that $\rho_A((\frac{n-1}{4}K_2\cup K_1)\nabla \frac{n-1}{2}K_1)=\lambda_1(B_\Pi)<\frac{n+1}{2}=\rho_A(H_n)$ by Lemma \ref{extremal-graph}. Similarly, one can verify that $\rho_A((\frac{n-5}{4}K_2\cup K_1)\nabla \frac{n+3}{2}K_1)<\rho_A(H_n)$.  Thus  (\ref{eq-3}) holds, and $G=H_n$ by the maximality of $\rho_A(G)$.  For  $n\not\equiv 1~\mathrm{mod}~4$, by using a similar method, we find that  $H_n$ always has the maximum spectral radius. Therefore, we conclude that $G=H_n$ in this situation.

\begin{table}[t]
\caption{All possible forms of $G=(\lfloor\frac{\bar{d}_u+2}{2}\rfloor K_2\cup (\bar{d}_u-2\lfloor\frac{\bar{d}_u}{2}\rfloor) K_1)\nabla (d_u-1)K_1$.}
\centering
\setlength{\tabcolsep}{9mm}{
\begin{tabular}{cccc}
\toprule
$n~\mathrm{mod}~4$ & $d_u$ & $\bar{d}_u$ & $G$\\
\midrule
$1$ & $\frac{n+1}{2}$ & $\frac{n-3}{2}$& $(\frac{n-1}{4}K_2\cup K_1)\nabla \frac{n-1}{2}K_1$\\
$1$ & $\frac{n+3}{2}$ & $\frac{n-5}{2}$ &$\frac{n-1}{4}K_2\nabla \frac{n+1}{2}K_1=H_n$ \\
$1$ & $\frac{n+5}{2}$ & $\frac{n-7}{2}$ & $(\frac{n-5}{4}K_2\cup K_1)\nabla \frac{n+3}{2}K_1$\\
$3$ & $\frac{n+1}{2}$ & $\frac{n-3}{2}$& $\frac{n+1}{4}K_2\nabla \frac{n-1}{2}K_1=H_n$\\
$3$ & $\frac{n+3}{2}$ & $\frac{n-5}{2}$ &$(\frac{n-3}{4}K_2\cup K_1)\nabla \frac{n+1}{2}K_1$\\
$3$ & $\frac{n+5}{2}$ & $\frac{n-7}{2}$ & $\frac{n-3}{4}K_2\nabla \frac{n+3}{2}K_1$\\
$0$ & $\frac{n+2}{2}$ & $\frac{n-4}{2}$& $\frac{n}{4}K_2\nabla \frac{n}{2}K_1=H_n$\\
$0$ & $\frac{n+4}{2}$ & $\frac{n-6}{2}$ & $(\frac{n-4}{4}K_2\cup K_1)\nabla \frac{n+2}{2}K_1$\\
$2$ & $\frac{n+2}{2}$ & $\frac{n-4}{2}$ & ($\frac{n-2}{4}K_2\cup K_1)\nabla \frac{n}{2}K_1=H_n$\\
$2$ & $\frac{n+4}{2}$ & $\frac{n-6}{2}$& $\frac{n-2}{4}K_2\nabla \frac{n+2}{2}K_1$\\
\bottomrule
\end{tabular}}
\label{tab-1}
\end{table}

{\flushleft \bf Subcase 1.2.} $b=1$.

In this situation, we have $G=G(d_u-3,1,c,d)$ or  $G(d_u-3,1,c,d)-e$, where $e$ is some edge between $N(u)$ and $N_2(u)$ in $G(d_u-3,1,c,d)$. Because  $G$ is wheel-free, we must have  $c\leq 1$. 

If $c=0$,  since both $G(d_u-3,1,c,d)$ and  $G(d_u-3,1,c,d)-e$ are wheel-free, we conclude that $G=G(d_u-3,1,0,\bar{d}_u)=G(d_u-3,1,0,n-1-d_u)$ by the maximality of $\rho_A(G)$. Observe that $A(G)$ has the equitable quotient matrix 
$$
B_\Pi=\begin{bmatrix}
0 &1& d_u-3 &1 &1& 0\\
 1 &0 &d_u-3 &1 &0 &0\\
  1 &1& 0 &0& 0& n-1-d_u\\
   1 &1 &0 &0 &1 &n-1-d_u\\
    1 &0 &0& 1& 0 &n-1-d_u\\
     0 &0 &d_u-3 &1 &1 &0
\end{bmatrix}
\begin{array}{l}
\{u\}\\
\{u_0\}\\
\{u_1,\ldots,u_{d_u-3}\}\\
\{v_1\}\\
\{w_1\}\\
\{z_1,\ldots,z_{n-1-d_u}\}
\end{array}
$$
By a simple computation, the characteristic polynomial of $B_\Pi$ is equal to 
$$
\begin{aligned}
\varphi(B_\Pi,x,d_u)&=x^6 + (d_u^2 - (n +2)d_u + n)x^4 + (4 - 2n)x^3 \\
&~~~- (3d_u^2 - (3n + 6)d_u + 6n + 3)x^2+ (2n - 8)x\\
&~~~+ d_u^2 - (n + 2)d_u + 3n - 3.\\
\end{aligned}
$$
Notice that 
$$
\varphi\Big(B_\Pi,\frac{2n+1}{4},d_u\Big)=\alpha(n)\cdot d_u^2 + \beta(n)\cdot d_u +\gamma(n), 
$$
where 
$$
\left\{
\begin{aligned}
\alpha(n)&=\frac{1}{16}n^4+ \frac{1}{8}n^3 - \frac{21}{32}n^2 - \frac{21}{32}n + \frac{209}{256},\\
\beta(n)&= - \frac{1}{16}n^5 - \frac{1}{4}n^4 + \frac{13}{32}n^3 + \frac{65}{32}n^2 + \frac{159}{256}n - \frac{209}{128},\\
\gamma(n)&=\frac{1}{64}n^6 + \frac{7}{64}n^5 - \frac{17}{256}n^4 - \frac{159}{128}n^3 - \frac{657}{1024}n^2 - \frac{1305}{1024}n - \frac{20991}{4096}.
\end{aligned}
\right.
$$
Since $\alpha(n)>0$ due to $n\geq 11$, and $-\beta(n)/(2\alpha(n))=\frac{n+2}{2}$, we obtain 
$$
\begin{aligned}
\varphi\Big(B_\Pi,\frac{2n+1}{4},d_u\Big)&\geq \varphi\Big(B_\Pi,\frac{2n+1}{4},\frac{n+2}{2}\Big)\\
&=\frac{1}{64}n^5 - \frac{23}{256}n^4 - \frac{17}{32}n^3 + \frac{271}{512}n^2 - \frac{1405}{1024}n - \frac{24335}{4096}\\
&>0, 
\end{aligned}
$$
where the last inequality follows from $n\geq 11$. Thus we have $\lambda_1(B_\Pi)<\frac{2n+1}{4}$ or $\lambda_2(B_\Pi)>\frac{2n+1}{4}$. We shall prove that the later case cannot occur.  Let $D=\mathrm{diag}(1,1,d_u-3,1,1,n-1-d_u)$. Then
$$
\begin{aligned}
\tilde{B}_\Pi&=D^{\frac{1}{2}}B_\Pi D^{-\frac{1}{2}}\\
&=
\left[\begin{smallmatrix}
0 &1 &      \sqrt{d_u-3} &    1 &    1 &  0\\
1 &0 &      \sqrt{d_u-3} &    1 &    0 &  0\\
 \sqrt{d_u-3}& \sqrt{d_u-3} &      0 &    0 &    0 &\sqrt{(d_u-3)(n-1-d_u)}\\
1 &1 &     0 &    0 &    1 &  \sqrt{n-1-d_u}\\
1 &0 &       0 &    1 &    0 &  \sqrt{n-1-d_u}\\
 0 &0& \sqrt{(d_u-3)(n-1-d_u)} &\sqrt{n-1-d_u}& \sqrt{n-1-d_u} &  0\\
 \end{smallmatrix}\right]
 \end{aligned}
$$
is symmetric, and has the same eigenvalues as $B_\Pi$. Let $\tilde{B}_\Pi'$ be the matrix obtained by deleting the third row and column from  $\tilde{B}_\Pi$. By the Cauchy interlacing theorem and Lemma \ref{row_sum}, we have
$$
\begin{aligned}
\lambda_2(B_\Pi)=\lambda_2(\tilde{B}_\Pi)&\leq\lambda_1(\tilde{B}_\Pi')\\
&\leq \max\left\{\sqrt{n-1-d_u}+3, 2\sqrt{n-1-d_u}\right\}\\
&\leq \frac{2n+1}{4}
\end{aligned}
$$
because  $d_u\in \{\frac{n+1}{2},\frac{n+2}{2},\frac{n+3}{2},\frac{n+4}{2},\frac{n+5}{2}\}$ by (P1) and $n\geq 11$, as required. Therefore, we conclude that $\rho_A(G)=\lambda_1(B_\Pi)<\frac{2n+1}{4}<\frac{\sqrt{n^2-3}+1}{2}<\rho_A(H_n)$ by Lemma \ref{extremal-graph}, contrary to our assumption. 

If $c=1$, since $G$ is wheel-free, we must have $G=G(d_u-3,1,1,\bar{d}_u-2)-e=G(d_u-3,1,1,n-3-d_u)-e$, where $e$ is an edge between $\{v_1,w_1\}$ and $\{x_1,y_1\}$ in $G(d_u-3,1,1,n-3-d_u)$. By symmetry, we may assume that $e=v_1x_1$ or $w_1y_1$. If $e=v_1x_1$, we see that $A(G)$ has the equitable quotient matrix 
$$
B_\Pi=\begin{bmatrix}
0 &1& d_u-3 &2& 0\\
 1 &0 &d_u-3 &1 &0 \\
  2 &2& 0 &0&  n-3-d_u\\
   2 &1 &0 &1 &n-3-d_u\\
    0 &0 &d_u-3& 2& 0 \\
\end{bmatrix}
\begin{array}{l}
\{u,y_1\}\\
\{u_0,x_1\}\\
\{u_1,\ldots,u_{d_u-3}\}\\
\{v_1,w_1\}\\
\{z_1,\ldots,z_{n-3-d_u}\}
\end{array}
$$
As above, we can deduce that $\lambda_1(B_\Pi)<\frac{2n+1}{4}$ or $\lambda_2(B_\Pi)>\frac{2n+1}{4}$. Again, we claim that the later case cannot occur. In fact, if $n=11$, we can directly verify the result because we have known that $d_u\in \{6,7,8\}$ by (P1). For $n\geq 12$, let 
$$
\begin{aligned}
\tilde{B}_\Pi&=D^{\frac{1}{2}}B_\Pi D^{-\frac{1}{2}}\\
&=
\left[\begin{smallmatrix}
0 &1 &      \sqrt{2(d_u-3)} &    2 &  0\\
1 &0&      \sqrt{2(d_u-3)} &    1 &  0\\
\sqrt{2(d_u-3)} &\sqrt{2(d_u-3)} &     0 &   0 &  \sqrt{(d_u-3)(n-3-d_u)}\\
2 &1 &     0 &   1 &  \sqrt{2(n-3-d_u)}\\
0 &0 &    \sqrt{(d_u-3)(n-3-d_u)} &   \sqrt{2(n-3-d_u)} &  0\\
 \end{smallmatrix}\right],
 \end{aligned}
$$
where $D=\mathrm{diag}(2,2,d_u-3,2,n-3-d_u)$, and let  $\tilde{B}_\Pi'$ denote the matrix obtained by deleting the third row and column from  $\tilde{B}_\Pi$. Then we have
$$
\begin{aligned}
\lambda_2(B_\Pi)=\lambda_2(\tilde{B}_\Pi)&\leq\lambda_1(\tilde{B}_\Pi')\\
&\leq  \sqrt{2(n-3-d_u)}+4\\
&\leq \frac{2n+1}{4}
\end{aligned}
$$
because $d_u\in \{\frac{n+1}{2},\frac{n+2}{2},\frac{n+3}{2},\frac{n+4}{2},\frac{n+5}{2}\}$ and $n\geq 12$, as required. Thus $\rho_A(G)=\lambda_1(B_\Pi)<\frac{2n+1}{4}<\frac{\sqrt{n^2-3}+1}{2}<\rho_A(H_n)$ by Lemma \ref{extremal-graph},  a contradiction. If $e=w_1y_1$, then $A(G)$ has the equitable quotient matrix 
$$
B_\Pi=\begin{bmatrix}
0 &1& d_u-3 &1& 1&0\\
 1 &0 &d_u-3 &1 &0 &0 \\
  2 &2& 0 &0&  0& n-3-d_u\\
   2 &2 &0 &0& 1 &n-3-d_u\\
   2 &0 &0 &1 &0 &n-3-d_u\\
    0 &0 &d_u-3& 1& 1 &0 \\
\end{bmatrix}
\begin{array}{l}
\{u,x_1\}\\
\{u_0,y_1\}\\
\{u_1,\ldots,u_{d_u-3}\}\\
\{v_1\}\\
\{w_1\}\\
\{z_1,\ldots,z_{n-3-d_u}\}
\end{array}
$$
By using a similar analysis as above, we can deduce that $\lambda_1(B_\Pi)<\frac{2n+1}{4}$, and so $\rho_A(G)=\lambda_1(B_\Pi)<\frac{\sqrt{n^2-3}+1}{2}<\rho_A(H_n)$, contrary to the assumption.

 {\flushleft \bf Subcase 1.3.} $2\leq b \leq \frac{d_u-1}{2}$.

 In this situation, we have $G=G(d_u-2b-1,b,c,d)$ or  $G(d_u-2b-1,b,c,d)-e$, where $e$ is some edge between $N(u)$ and $N_2(u)$ in $G(d_u-2b-1,b,c,d)$. Since $G$ is wheel-free, we must have $c=0$. Notice that both $G=G(d_u-2b-1,b,0,d)$ and  $G(d_u-2b-1,b,0,d)-e$ are wheel-free. Thus we conclude that  $G=G(d_u-2b-1,b,0,d)=G(d_u-2b-1,b,0,\bar{d}_u)=G(d_u-2b-1,b,0,n-1-d_u)$. 
 
If $b\neq \frac{d_u-1}{2}$, then $b\leq \frac{d_u-2}{2}$, and we see that  $A(G)$ has the equitable quotient matrix
$$
B_\Pi=\begin{bmatrix}
0 &1& d_u-2b-1 &b &b& 0\\
 1 &0 &d_u-2b-1 &b &0 &0\\
  1 &1& 0 &0& 0& n-1-d_u\\
   1 &1 &0 &0 &1 &n-1-d_u\\
    1 &0 &0& 1& 0 &n-1-d_u\\
     0 &0 &d_u-2b-1 &b &b &0
\end{bmatrix}
\begin{array}{l}
\{u\}\\
\{u_0\}\\
\{u_1,\ldots,u_{d_u-2b-1}\}\\
\{v_1,\ldots, v_b\}\\
\{w_1,\ldots, w_b\}\\
\{z_1,\ldots,z_{n-1-d_u}\}
\end{array}
$$
By a simple calculation, the  characteristic polynomial of $B_\Pi$ is 
$$
\begin{aligned}
\varphi(B_\Pi,x,d_u,b)&=x^6 + (d_u^2 - (n +2)d_u + n+b-1)x^4 + ((2b-2)d_u+2b+2 - 2bn)x^3 \\
&~~~- ((b+2)d_u^2 - (b^2+(b+2)n +3b+2)d_u + (b+1)(b+2)n +4b-1)x^2\\
&~~~+ ((2-2b)d_u+(2n - 6)b-2)x + d_u^2 - (n + 2b)d_u +(2b+1)(n-1).
\end{aligned}
$$
We shall prove that $\lambda_1(B_\Pi)<\frac{2n+1}{4}$.  It is easy to see that $\varphi\left(B_\Pi,\frac{2n+1}{4},d_u,b\right)$ can be expressed as
$$
\varphi\Big(B_\Pi,\frac{2n+1}{4},d_u,b\Big)=\alpha(d_u,n) \cdot b^2 +\beta(d_u,n) \cdot b +\gamma(d_u,n),
$$
where $\alpha(d_u,n)=-\frac{1}{16}(2n + 1)^2(n-d_u)$, $\beta(d_u,n)$ and $\gamma(d_u,n)$ are the functions of $d_n$ and $n$. Since $\alpha(d_u,n)<0$, we must have 
$$\varphi\Big(B_\Pi,\frac{2n+1}{4},d_u,b\Big)\geq\min\left\{\varphi\Big(B_\Pi,\frac{2n+1}{4},d_u,2\Big),\varphi\Big(B_\Pi,\frac{2n+1}{4},d_u,\frac{d_u-2}{2}\Big)\right\}.$$
Recall that $d_u\in \{\frac{n+1}{2},\frac{n+2}{2},\frac{n+3}{2},\frac{n+4}{2},\frac{n+5}{2}\}$ by (P1). If $d_u=\frac{n+1}{2}$, by a simple computation, we obtain 
$$
\varphi\Big(B_\Pi,\frac{2n+1}{4},d_u,2\Big)=\frac{1}{64}n^5 - \frac{19}{256}n^4 - \frac{7}{8}n^3 + \frac{195}{512}n^2 - \frac{4101}{1024}n -\frac{43067}{4096}>0
$$
and 
$$
\varphi\Big(B_\Pi,\frac{2n+1}{4},d_u,\frac{d_u-2}{2}\Big)=\frac{1}{128}n^5 - \frac{7}{256}n^4 - \frac{223}{512}n^3 - \frac{97}{256}n^2 + \frac{1005}{512}n -\frac{11361}{4096}>0
$$
by considering that $n\geq 11$. For other values of $d_u$,  one can verify that the above two inequalities also hold. Thus  $\varphi(B_\Pi,\frac{2n+1}{4},d_u,b)>0$, which leads to $\lambda_1(B_\Pi)<\frac{2n+1}{4}$ or $\lambda_2(B_\Pi)>\frac{2n+1}{4}$. Again, the later case cannot occur. Let 
$$
\begin{aligned}
\tilde{B}_\Pi&=D^{\frac{1}{2}}B_\Pi D^{-\frac{1}{2}}\\
&=
\left[\begin{smallmatrix}
0 &1 &      \sqrt{d_u-2b-1} &   \sqrt{b} &    \sqrt{b}&  0\\
1 &0 &      \sqrt{d_u-2b-1} &    \sqrt{b} &    0 &  0\\
 \sqrt{d_u-2b-1}& \sqrt{d_u-2b-1} &      0 &    0 &    0 &\sqrt{(d_u-2b-1)(n-1-d_u)}\\
\sqrt{b} &\sqrt{b} &     0 &    0 &    1 & \sqrt{b(n-1-d_u)}\\
\sqrt{b} &0 &       0 &    1 &    0 & \sqrt{b(n-1-d_u)}\\
 0 &0& \sqrt{(d_u-2b-1)(n-1-d_u)} &\sqrt{b(n-1-d_u)}& \sqrt{b(n-1-d_u)} &  0\\
 \end{smallmatrix}\right],
 \end{aligned}
$$
where $D=\mathrm{diag}(1,1,d_u-2b-1,b,b,n-1-d_u)$, and let $\tilde{B}_\Pi'$ be the matrix obtained by deleting the last  row and column from  $\tilde{B}_\Pi$. As above, we have 
$$
\begin{aligned}
\lambda_2(B_\Pi)=\lambda_2(\tilde{B}_\Pi)\leq\lambda_1(\tilde{B}_\Pi')\leq \max\left\{\sqrt{d_u-2b-1}+2\sqrt{b}+1, 2\sqrt{d_u-2b-1}\right\}.
\end{aligned}
$$
Notice that 
$$(\sqrt{d_u-2b-1}+2\sqrt{b})^2=d_u+2b-1+4\sqrt{(d_u-2b-1)b}\leq 3d_u-3.$$ Combining this with $d_u\in \{\frac{n+1}{2},\frac{n+2}{2},\frac{n+3}{2},\frac{n+4}{2},\frac{n+5}{2}\}$ and $n\geq 11$, we can  deduce that $\sqrt{d_u-2b-1}+2\sqrt{b}+1\leq \frac{2n+1}{4}$. Also, one can verify that $2\sqrt{d_u-2b-1}\leq 2\sqrt{d_u-5}\leq \frac{2n+1}{4}$. Thus  $\lambda_2(B_\Pi)\leq \frac{2n+1}{4}$, as required. Therefore, we have $\rho_A(G)=\lambda_1(B_\Pi)<\frac{2n+1}{4}<\frac{\sqrt{n^2-3}+1}{2}<\rho_A(H_n)$, which is impossible.

If $b=\frac{d_u-1}{2}$,  then $G=G(0,\frac{d_u-1}{2},0,n-1-d_u)$, and $A(G)$ has the equitable quotient matrix
$$
B_\Pi=\begin{bmatrix}
0 &1& \frac{d_u-1}{2}&\frac{d_u-1}{2} & 0\\
 1 &0 &\frac{d_u-1}{2} &0 &0\\
  1 &1& 0 & 1& n-1-d_u\\
   1 &0 &1 &0  &n-1-d_u\\
     0 &0 &\frac{d_u-1}{2} &\frac{d_u-1}{2} &0
\end{bmatrix}
\begin{array}{l}
\{u\}\\
\{u_0\}\\
\{v_1,\ldots, v_{\frac{d_u-1}{2}}\}\\
\{w_1,\ldots, w_{\frac{d_u-1}{2}}\}\\
\{z_1,\ldots,z_{n-1-d_u}\}
\end{array}
$$
By using a similar method, we also obtain $\rho_A(G)=\lambda_1(B_\Pi)<\frac{2n+1}{4}<\frac{\sqrt{n^2-3}+1}{2}<\rho_A(H_n)$, contrary to the assumption. 

{\flushleft \bf Case 2.}  $p_u=0$.

\begin{table}[t]
\caption{All possible forms of $G=(\lfloor\frac{d_u}{2}\rfloor K_2\cup (d_u-2\lfloor\frac{d_u}{2}\rfloor)K_1)\nabla (n-d_u)K_1$.}
\centering
\setlength{\tabcolsep}{13mm}{
\begin{tabular}{cccc}
\toprule
$n~\mathrm{mod}~4$ & $d_u$ & $G$\\
\midrule
$1$ & $\frac{n+1}{2}$ & $(\frac{n-1}{4}K_2\cup K_1)\nabla \frac{n-1}{2}K_1$\\
$3$ & $\frac{n+1}{2}$ & $\frac{n+1}{4}K_2\nabla \frac{n-1}{2}K_1=H_n$\\
$0$ & $\frac{n}{2}$ & $\frac{n}{4}K_2\nabla \frac{n}{2}K_1=H_n$\\
$0$ & $\frac{n+2}{2}$  & $(\frac{n}{4}K_2\cup K_1)\nabla \frac{n-2}{2}K_1$\\
$2$ & $\frac{n}{2}$  & ($\frac{n-2}{4}K_2\cup K_1)\nabla \frac{n}{2}K_1=H_n$\\
$2$ & $\frac{n+2}{2}$ & $\frac{n+2}{4}K_2\nabla \frac{n-2}{2}K_1$\\
\bottomrule
\end{tabular}}
\label{tab-2}
\end{table}

In this case,  since $G[N(u)]$ is $P_3$-free, we have $G[N(u)]=aK_2\cup bK_1$ with $a,b\geq 0$ and $2a+b=d_u$. First we shall prove that $a>0$. In fact, if $a=0$, i.e., $G[N(u)]$ is an empty graph, then 
$$
 R_u=d_u+2e(G[N(u)])+e(N(u),N_2(u))\leq d_u+d_u\bar{d}_u=-d_u^2+nd_u.
$$
By Claim \ref{claim-1}, we deduce that
$$
d_u^2-nd_u+\frac{(n+1)^2-1}{4}\leq 0,
$$
which is impossible because $\Delta=n^2-((n+1)^2-1)=-2n<0$. Next we claim that  $G[N_2(u)]$ is also $P_3$-free. If not, since $G[N(u)]$ contains at least two vertices due to $a>0$, we have $e(N(u),N_2(u))\leq d_u\bar{d}_u-1$ by Fact \ref{fact-2}, and so 
$$
\begin{aligned}
R_u&=d_u+2(d_u-\omega_u)+e(N(u),N_2(u))\\
&\leq d_u+2(d_u-\frac{d_u}{2})+d_u\bar{d}_u-1\\
&=-d_u^2+(n+1)d_u-1.
\end{aligned}
$$
Combining this with Claim \ref{claim-1} yields that 
$$
d_u^2-(n+1)d_u+1+\frac{(n+1)^2-1}{4}\leq 0,
$$
which is impossible because $\Delta=(n+1)^2-4(1+\frac{(n+1)^2-1}{4})=-3<0$.  Hence, we can suppose  $G[N_2(u)]=cK_2\cup dK_1$, where $c,d\geq 0$ and $2c+d=\bar{d}_u=n-1-d_u$. Then, again by Fact \ref{fact-2}, we have  $e(N(u),N_2(u))\leq d_u\bar{d}_u-ac$, and so
$$
R_u\leq d_u+2\Big(d_u-\frac{d_u}{2}\Big)+d_u\bar{d}_u-ac=-d_u^2+(n+1)d_u-ac.
$$
Combining this with  Claim \ref{claim-1}, we obtain
\begin{equation}\label{eq-4}
d_u^2-(n+1)d_u+ac+\frac{(n+1)^2-1}{4}\leq 0, 
\end{equation}
which implies  that  $c=0$ because $a>0$ and  $\Delta=1-4ac\geq 0$. Putting $c=0$ in (\ref{eq-4}), we obtain $d_u=\frac{n}{2}$, $\frac{n+1}{2}$ or $\frac{n+2}{2}$. Furthermore, according to the above discussions, we must have $e(N(u),e(N_2(u)))=d_u\bar{d}_u$. Concluding these results, we obtain $G=(aK_2\cup bK_1)\nabla (\bar{d_u}+1)K_1=(aK_2\cup bK_1)\nabla (n-d_u)K_1$, where $a>0$ and $2a+b=d_u\in \{\frac{n}{2}, \frac{n+1}{2},\frac{n+2}{2}\}$. Notice that $(aK_2\cup bK_1)\nabla (n-d_u)K_1$ is always wheel-free. By considering the maximality of $\rho_A(G)$, we conclude that $G=(\lfloor\frac{d_u}{2}\rfloor K_2\cup (d_u-2\lfloor\frac{d_u}{2}\rfloor)K_1)\nabla (n-d_u)K_1$ with $d_u\in \{\frac{n}{2}, \frac{n+1}{2},\frac{n+2}{2}\}$. In Table \ref{tab-2}, we list all possible forms of $G$. If $n\equiv 1~\mathrm{mod}~4$, then $G=(\frac{n-1}{4}K_2\cup K_1)\nabla \frac{n-1}{2}K_1$. According to  the analysis of Subcase 1.1, we have $\rho_A(G)=\rho_A((\frac{n-1}{4}K_2\cup K_1)\nabla \frac{n-1}{2}K_1)<\rho_A(H_n)$, contrary to our assumption. If  $n\equiv 3~\mathrm{mod}~4$,  then $G=H_n$, as required. For $n\equiv 0~\mathrm{mod}~4$ and $n\equiv 2~\mathrm{mod}~4$, as in Subcase 1.1, we can verify that the graph $H_n$ always has the maximum spectral radius.

We complete the proof.
\end{proof}

Now we  give the proof of Theorem \ref{thm-2}.

\renewcommand\proofname{\bf Proof of Theorem \ref{thm-2}}
\begin{proof}
Assume that $G$ is a graph attaining the maximum signless Laplacian spectral radius among all wheel-free graphs of order $n$. As in the proof of Theorem \ref{thm-1}, we claim that $G$ is connected.  Since  $K_2 \nabla (n-2)K_1$ is wheel-free, we have 
$$\rho_Q(G)\geq \rho_Q(K_2 \nabla (n-2)K_1)=\frac{n+2+\sqrt{(n+2)^2-16}}{2}$$
by Lemma \ref{Q-extremal-graph}, which gives that 
\begin{equation}\label{eq-5}
\rho_Q^2(G)-(n+2)\rho_Q(G)+4\geq 0.
\end{equation}
Let $Q^*(G)=Q^2(G)-(n+2)Q(G)+4I_n$. Clearly, $\rho_Q^2(G)-(n+2)\rho_Q(G)+4$ is an eigenvalue of $Q^*(G)$ with an eigenvector all of whose entries are nonnegative. Let $R_v^*$ be the row sum of $Q^*(G)$ corresponding to $v\in V(G)$, and let $
R_u^*=\max_{v\in V(G)}R_v^*$.
By Lemma \ref{row_sum} and (\ref{eq-5}), we have 
\begin{equation}\label{eq-6}
R_u^*\geq \rho_Q^2(G)-(n+2)\rho_Q(G)+4\geq 0.
\end{equation}
On the other hand, we see that
\begin{equation}\label{eq-7}
\begin{aligned}
R_u^*&=2d_u^2+2[d_u+2e(G[N(u)])+e(N(u),N_2(u))]-2(n+2)d_u+4\\
&\leq 2d_u^2+2[d_u+2(d_u-\omega_u)+d_u\bar{d_u}]-2(n+2)d_u+4\\
&\leq 2d_u^2+2[d_u+2(d_u-1)+d_u(n-1-d_u)]-2(n+2)d_u+4\\
&=0,
\end{aligned}
\end{equation}
where $\bar{d}_u=|N_2(u)|$ and $\omega_u$ are defined as in the proof of Theorem \ref{thm-1}.
Combining (\ref{eq-6}) and (\ref{eq-7}), we obtain $\omega_u=1$ (i.e., $G[N(u)]$ is a tree), $\bar{d}_u=n-1-d_u$ (i.e., $V(G)=\{u\}\cup N(u)\cup N_2(u)$) and $e(N(u),N_2(u))=d_u\bar{d}_u$ (that is, the edges between $N(u)$ and $N_2(u)$ form a complete bipartite graph). We consider the following  two cases.

{\flushleft \bf Case 1.} $N_2(u)=\emptyset$.

In this situation, from the above arguments we obtain $G=K_1\nabla G[N(u)]$, where $G[N(u)]$ is a tree of order $n-1$. If $G(N[u])=K_{1,n-2}$, then  $G=K_2\nabla (n-2)K_1$, as required. Now suppose $G(N[u])\neq K_{1,n-2}$.  Let $\mathbf{x}$ be the unique unit positive  eigenvector (or Perron vector) of $Q(G)$ corresponding to $\rho_Q(G)$, and let $\mathbf{x}_{v_0}=\max_{v\in N(u)} \mathbf{x}_{v}$. Let $\ell$ denote the diameter of $G[N(u)]$. Notice that $\ell\geq 3$ because $G(N[u])$ is a tree but $G(N[u])\neq K_{1,n-2}$. For $1\leq k\leq \ell$,  let $N_k^*(v_0)$ denote the set of vertices at distance $k$ from $v_0$ in $G[N(u)]$. Since $G[N(u)]$ is a tree, we observe that $N_k^*(v_0)$ is an independent set of $G[N(u)]$, and each vertex of $N_k^*(v_0)$ has exactly one neighbor in $N_{k-1}^*(v_0)$.  Let $G'$ be the graph obtained from $G$ by deleting those edges not incident with $v_0$ and connecting $v_0$ with all the resulting isolated vertices in $G[N(u)]$. It is clear that  $G'=K_2\nabla (n-2)K_1$. Then we have
$$
\begin{aligned}
\rho_Q(G)&=\mathbf{x}^TQ(G)\mathbf{x}\\
&= \sum_{vw\in E(G)}(\mathbf{x}_v+\mathbf{x}_w)^2\\
&= \sum_{v\in N(u)}(\mathbf{x}_u+\mathbf{x}_v)^2+\sum_{vw\in E(G[N(u)])}(\mathbf{x}_v+\mathbf{x}_w)^2\\
&=\sum_{v\in N(u)}(\mathbf{x}_u+\mathbf{x}_v)^2+\sum_{w\in N(v_0)}(\mathbf{x}_{v_0}+\mathbf{x}_w)^2+\sum_{i=1}^{\ell-1}\sum_{v_iv_{i+1}\in E(N_i^*(v_0),N_{i+1}^*(v_0))}(\mathbf{x}_{v_i}+\mathbf{x}_{v_{i+1}})^2\\
&\leq \sum_{v\in N(u)}(\mathbf{x}_u+\mathbf{x}_v)^2+\sum_{w\in N(v_0)}(\mathbf{x}_{v_0}+\mathbf{x}_w)^2+\sum_{i=1}^{\ell-1}\sum_{v_iv_{i+1}\in E(N_i^*(v_0),N_{i+1}^*(v_0))}(\mathbf{x}_{v_0}+\mathbf{x}_{v_{i+1}})^2\\
&=\sum_{v\in N(u)}(\mathbf{x}_u+\mathbf{x}_v)^2+\sum_{w\in N(v_0)}(\mathbf{x}_{v_0}+\mathbf{x}_w)^2+\sum_{i=1}^{\ell-1}\sum_{v_{i+1}\in N_{i+1}^*(v_0)}(\mathbf{x}_{v_0}+\mathbf{x}_{v_{i+1}})^2\\
&= \sum_{v\in N(u)}(\mathbf{x}_u+\mathbf{x}_v)^2+\sum_{vw\in E(G'[N(u)])}(\mathbf{x}_v+\mathbf{x}_w)^2\\
&= \sum_{vw\in E(G')}(\mathbf{x}_v+\mathbf{x}_w)^2\\
&=\mathbf{x}^TQ(G')\mathbf{x}\\
&\leq \rho_Q(G').
\end{aligned}
$$
We claim that $\rho_Q(G)<\rho_Q(G')=\rho_Q(K_2\nabla (n-2)K_1)$. In fact, if $\rho_Q(G)=\rho_Q(G')$, then from the above inequality and the Perron-Frobenius theorem we see that $\mathbf{x}$ is an eigenvector of $Q(G')$ corresponding to $\rho_Q(G')$ and  $\mathbf{x}_v=\mathbf{x}_{v_0}$ for any non-pendant vertex $v$ of $G[N(u)]$. Let $v$ be a non-pendant neighbor of $v_0$ in $G[N(v)]$ (such a vertex exists because $\ell\geq 3$). Notice that $v$ is of degree $1$ in $G'[N(u)]$. By considering the eigenvalue-eigenvector equation of  $\rho_Q(G')$ and $\mathbf{x}$ at $v_0$ and $v$, we have 
$$
\left\{
\begin{aligned}
\rho_Q(G') \mathbf{x}_{v_0}&=(n-1) \mathbf{x}_{v_0}+\sum_{w\in N(u)\setminus\{v_0\}}\mathbf{x}_{w}+x_u,\\
\rho_Q(G') \mathbf{x}_{v}&=2 \mathbf{x}_{v}+\mathbf{x}_{v_0}+x_u,
\end{aligned}
\right.
$$
which is impossible because $n\geq 4$, $v\in N(u)\setminus\{v_0\}$ and $\mathbf{x}_v=\mathbf{x}_{v_0}$.
Therefore, in this situation, we conclude that  $G=K_2\nabla (n-2)K_1$. 

{\flushleft \bf Case 2.} $N_2(u)\neq \emptyset$.

In this situation, we see that $G[N(u)]$ is $P_3$-free because $G$ is wheel-free. Thus $G[N(u)]=K_1$ or $K_2$ by the above arguments. If $G[N(u)]=K_1$, then $d_u=1$. Let $v_0$ be the unique neighbor of $u$. Then  $d_{v_0}=n-1$. Since $G[N(v_0)]$ must be a forest (with at least two components), according to the proof of Case 1,  we conclude that $\rho_Q(G)<\rho_Q(K_2\nabla (n-2)K_1)$. If $G[N(u)]=K_2$, then $N_2(u)$ must be an independent set by Fact \ref{fact-2}, and so we obtain $G=K_2\nabla (n-2)K_1$.

We complete the proof.
\end{proof}

\section*{Acknowledgements}
The authors are indebted to S.M. Cioab\u{a}, Zhiwen Wang and Zhenzhen Lou for many helpful suggestions. This research was supported by the National Natural Science Foundation of China (Nos. 11901540, 11671344 and 11771141).

\end{document}